\newcommand{\tpitchfork}{%
  \vbox{
    \baselineskip\z@skip
    \lineskip-.52ex
    \lineskiplimit\maxdimen
    \m@th
    \ialign{##\crcr\hidewidth\smash{$-$}\hidewidth\crcr$\pitchfork$\crcr}
  }%
  
}
\newtheorem{prop}{Proposition}[section]
\newtheorem{lemma}[prop]{Lemma}
\newtheorem{theorem}[prop]{Theorem}
\theoremstyle{remark}
\newtheorem{remark}[prop]{Remark}
    \newcommand\contFrac{\@ifstar{\@contFracStar}{\@contFracNoStar}}
   \def\singleContFrac#1#2{%
        \begin{array}{@{}c@{}}%
            \multicolumn{1}{c|}{#1}%
            \\%
            \hline%
           \multicolumn{1}{|c}{#2}%
        \end{array}%
   }
    \def\@contFracNoStar#1{%
        \mathchoice{
            \@contFracNoStarDisplay@#1//\@nil%
        }{
            \@contFracNoStarInline@#1//\@nil%
        }{
            \@contFracNoStarInline@#1//\@nil%
        }{
            \@contFracNoStarInline@#1//\@nil%
        }%
    }
    \def\@contFracNoStarDisplay@#1//#2\@nil{%
        \@ifmtarg{#2}{%
            #1%
        }{%
            #1+\cfrac{1}{\@contFracNoStarDisplay@#2\@nil}%
        }%
    }
        \def\@contFracNoStarInline@#1//#2\@nil{%
            \@ifmtarg{#2}{%
                #1%
            }{%
                #1 \@@contFracNoStarInline@@#2\@nil%
            }%
        }
        \def\@@contFracNoStarInline@@#1//#2\@nil{%
            \@ifmtarg{#2}{%
                + \singleContFrac{1}{#1}%
            }{%
                + \singleContFrac{1}{#1} \@@contFracNoStarInline@@#2\@nil%
            }%
        }
    \def\@contFracStar#1{%
        \mathchoice{
            \@contFracStarDisplay@#1////\@nil%
        }{
            \@contFracStarInline@#1//\@nil%
        }{
            \@contFracStarInline@#1//\@nil%
        }{
            \@contFracStarInline@#1//\@nil%
        }%
    }
    \def\@contFracStarDisplay@#1//#2//#3\@nil{%
        \@ifmtarg{#2}{%
            #1%
        }{%
            #1 + \cfrac{#2}{\@contFracStarDisplay@#3\@nil}%
        }%
    }
        \def\@contFracStarInline@#1//#2\@nil{%
            \@ifmtarg{#2}{%
                #1%
            }{%
                #1 \@@contFracStarInline@@#2\@nil%
            }%
        }
        \def\@@contFracStarInline@@#1//#2//#3\@nil{%
            \@ifmtarg{#3}{%
                + \singleContFrac{#1}{#2}%
            }{%
                + \singleContFrac{#1}{#2} \@@contFracStarInline@@#3\@nil%
            }%
        }
\numberwithin{equation}{section}
\newcommand{\confrac}[2]{%
  \frac{\displaystyle{%
    \strut\hfill{#1}\hfill\;\vrule}}%
      {\displaystyle{%
       \strut\vrule\;\hfill{#2}\hfill}}}%
\title[The distribution of the largest digit for parabolic IFSs]{The distribution of the largest digit for\\ parabolic Iterated Function Systems\\ of the interval}
\author{Hiroki Takahasi}
\address{Keio Institute of Pure and Applied Sciences (KiPAS), Department of Mathematics,
Keio University, Yokohama,
223-8522, JAPAN} 
\email{hiroki@math.keio.ac.jp}
\subjclass[2020]{11A55; 11K50; 37D25; 37E05}
\thanks{{\it Keywords}: 
Iterated Function System (IFS); continued fraction; dimension theory}
\begin{document}

\begin{abstract}
We investigate the distribution of the largest digit for a wide class of infinite parabolic Iterated Function Systems (IFSs) of the unit interval. Due to the recurrence to parabolic (neutral) fixed points, the dimension analysis of these systems become more delicate than that of uniformly contracting IFSs.
We show that the Hausdorff dimensions of level sets associated with the largest digits are constantly equal to the Hausdorff dimension of the limit set of the IFS. This result is an analogue of Wu and Xu's theorem [{\it Math. Proc. Cambridge Philos. Soc.} {\bf 146} (2009), no.1, 207--212] on the regular continued fraction. Examples of application of our result include the backward (aka minus, or negative) continued fractions, even-integer continued fractions, and go  
beyond. 
Our main tool is a dimension theory for non-uniformly expanding Bernoulli interval maps with infinitely many branches. 
\end{abstract}
\maketitle

\section{Introduction}
Any irrational number $x$ in $(0,1)$
   has a unique infinite expansion of the form 
 \[
x=\confrac{1 }{a_{1}(x)} + \confrac{1 }{a_{2}(x)}  +\confrac{1 }{a_{3}(x)}+\cdots,\]
called {\it the regular continued fraction},
  where each digit $a_n(x)$
 belongs to the set $\mathbb N$ of positive integers.
For a typical irrational $x\in (0,1)$
in the sense of the Lebesgue measure, 
the sequence $\{a_n(x)\}_{n\in\mathbb N}$ is unbounded and consists of mostly small digits, punctuated by occasional larger ones \cite{H00}. It is clear that
 the occurrence of large digits influences  statistical properties of digit sequences, see e.g.,
\cite{DB86} for details. Therefore, it is important to investigate 
the growth of the largest digit
$L_n(x)=\max\{a_1(x),\ldots,a_n(x)\}$ as $n\to\infty$. A pioneering result in this direction is due to
Galambos \cite{Gal72} who proved that 
\[\lim_{n\to\infty}\mu_{\rm G}\left\{x\in(0,1)\setminus\mathbb Q\colon\frac{L_n(x) }{n}<\frac{y}{\log2}\right\}=e^{-\frac{1}{y}}\ \text{ for }y>0,\]
where $\mu_{\rm G}$ denotes the Gauss measure  $\frac{1}{\log2}\frac{dx}{1+x}$. Later in \cite{Gal74} he proved that
\[\lim_{n\to\infty}\frac{\log L_n(x)}{\log n}=1\ \text{ for Lebesgue a.e. }x\in(0,1)\setminus\mathbb Q.\]
Philipp \cite{Phi75} considered the number
\[\ell(x)=\liminf_{n\to\infty}\frac{
L_n(x)\log\log n}{n},\]
and proved that $\ell(x)=1/\log2$ for Lebesgue a.e. $x\in(0,1)\setminus\mathbb Q$, solving
 Erd\H{o}s' conjecture apart from the value.
 Okano \cite{Oka02} proved that 
 for any $\alpha\in(0,\infty)$ there exists $x\in(0,1)\setminus\mathbb Q$ such that $\ell(x)=\alpha$. 
 Wu and Xu \cite[Theorem~1.1]{WuXu09} significantly strengthened Okano's result, proving that 
 for any $\alpha\in[0,\infty)$ the set
\[\left\{x\in(0,1)\setminus\mathbb Q\colon\lim_{n\to\infty}\frac{L_n(x)\log\log n}{n}=\alpha\right\}\]
is of Hausdorff dimension $1$. Chang and Chen \cite{CC18} proved results analogous to that of Wu and Xu replacing $L_n(x)$ or the norming function $n/\log\log n$.

All these interesting developments have taken place exclusively for the regular continued fraction.
The aim of this paper is to investigate growth rates of the largest digits for a wide class of infinite parabolic Iterated Function Systems. As a corollary we obtain an analogue of Wu and Xu's theorem for other continued fractions with totally different Lebesgue typical behaviors.

Let $X$ be a compact interval with positive Euclidean diameter.
Let $I$ be a subset of $\mathbb N$ with $\#I\geq2$, and 
let  $\phi_i\colon X\to X$ $(i\in I)$ be $C^1$ maps.
The collection
$\Phi=\{\phi_i\}_{i\in I }$ is called  {\it an Iterated Function System (IFS)} on $X$. It is called {\it an infinite (resp. finite)} IFS if $I$ is an infinite (resp. finite) set.  We say an IFS $\Phi$ satisfies {\it the open set condition} if for all distinct indices $i,j\in I,$ 
\[
\phi_i({\rm int}X)\cap \phi_j({\rm int}X)=\emptyset.
\]

Let $\Phi=\{\phi_i\}_{i\in I}$
be an IFS on $X$. For $\omega=(\omega_1,\omega_2,\ldots)\in I^{\mathbb N}$ and $n\in \mathbb{N}$, we set
\[\phi_{\omega_1\cdots \omega_n}=\phi_{\omega_1}\circ\cdots\circ\phi_{\omega_n}.\]
If the set $\bigcap_{n=1}^{\infty}\phi_{\omega_1\cdots\omega_{n}}(X)$ is a singleton for any $\omega\in I^{\mathbb N}$,  we
 define {\it an address map} 
 $\Pi \colon I^{\mathbb N} \to X$ by
\[\Pi(\omega)\in \bigcap_{n=1}^{\infty}\phi_{\omega_1\cdots\omega_n}(X),\]
and {\it the limit set} of $\Phi$ by
 \[
\Lambda(\Phi)=\Pi( I^{\mathbb N} ).
\]
Since the address map may not be injective, we introduce the set \[\Lambda_*(\Phi)=\{x\in\Lambda(\Phi)\colon\#\Pi^{-1}(x)=1\}.\] 
Since $X$ is an interval, if the open set condition holds then
 $\Lambda(\Phi)\setminus\Lambda_*(\Phi)$ is countable and of Hausdorff dimension zero.
For each $x\in\Lambda_*(\Phi)$,
 there is a unique sequence $\{a_n(x)\}_{n=1}^\infty\in I^\mathbb N$  
that satisfies $x=\Pi(\{a_n(x)\}_{n=1}^\infty)$.
Note that
\[
x = \lim_{n\to\infty} \phi_{a_1(x)} \circ \cdots \circ \phi_{a_n(x)} (y)\ \text{ for all }y\in X.
\]
If $\Phi$ is an infinite IFS, then
for each $\alpha\in[0,\infty]$, define a level set
\[L(\alpha)=\left\{x\in\Lambda_*(\Phi)\colon\lim_{n\to\infty}\frac{\max\{a_1(x),\ldots,a_n(x)\}\log\log n}{n}=\alpha\right\}.\]
Moreover, if there exist constants $c>0$, $d>1$ such that
\[\min_{x\in X}|\phi'_i(x)|\geq \frac{c}{i^d}
\ \text{ for every }i\in I,\]
then we say $\Phi$ is {\it $d$-decaying}.

An IFS $\Phi$ on $X$ is called {\it parabolic}
if the open set condition holds, and the following two conditions hold:
\begin{itemize}

\item[(A1)] (Non-uniform contraction) $|\phi'_i(x)|<1$ everywhere except for finitely many pairs $(i,x_i)$, $i\in I$, for which $x_i$ is the unique fixed point of $\phi_i$ and $|\phi'_i(x_i)|=1$.
Such pairs and indices $i$ are called {\it parabolic}.

\item[(A2)] (Bounded distortion) 
There exists a constant $C\ge 1$ such that for all $\omega\in I^{\mathbb N}$ and 
 $n\in\mathbb N_{\geq2}$ such that $\omega_{n}$ is not a parabolic index, or else $\omega_{n-1}\neq\omega_n$, 
\[
|\phi'_{\omega_1\cdots\omega_n}(x)|\le C|\phi'_{\omega_1\cdots\omega_n}(y)|\ \text{ for all }x,y\in X.
\]

\end{itemize}

For results on a dimension theory of finite or infinite parabolic IFSs, see \cite{GelRam09,Iom10,JJOP10,MauUrb00,MauUrb03,Nak00,Urb96} for example. Here,
our interest is in infinite parabolic IFSs. A prime example is given by the backward (aka minus, or negative) continued fraction, see Section~\ref{verify-s}
for more details.
Our definition of parabolic IFS is simpler than that in \cite[Section~8]{MauUrb03} since we only work on IFSs on a compact interval.

Let $\dim_{\rm H}$ denote the Hausdorff dimension on $[0,1]$. Our main result is stated as follows.

\begin{theorem}\label{mainthm} Let $d>1$, and let $\Phi=\{\phi_i\}_{i\in I}$ be a $d$-decaying parabolic IFS on $[0,1]$ that satisfies (B1) (B2). For any 
 $\alpha\in[0,\infty]$ we have
\[\dim_{\rm H}L(\alpha)=\dim_{\rm H}\Lambda(\Phi).\]
\end{theorem}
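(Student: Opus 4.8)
The inclusions $L(\alpha)\subseteq\Lambda_*(\Phi)\subseteq\Lambda(\Phi)$ give $\dim_{\rm H}L(\alpha)\le\dim_{\rm H}\Lambda(\Phi)$ for every $\alpha$ by monotonicity of Hausdorff dimension, so the entire content is the reverse inequality. The plan is to fix $\alpha\in[0,\infty]$ and a small $\epsilon>0$, and to build a subset $E\subseteq L(\alpha)$ carrying a suitable measure with $\dim_{\rm H}E\ge\dim_{\rm H}\Lambda(\Phi)-\epsilon$; letting $\epsilon\downarrow0$ then finishes. This mirrors Wu--Xu, the new difficulty being that the ambient system is only non-uniformly contracting near the parabolic points.

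First I would use the dimension theory established earlier (together with (B1) (B2) and the $d$-decay hypothesis) to produce a \emph{model} finite alphabet and a reference measure of almost-full dimension. Concretely, I expect to find a finite set $J\subseteq I$ and a Bernoulli-type measure $\nu$ supported on $\Lambda(\Phi_J)$, where $\Phi_J=\{\phi_i\}_{i\in J}$, such that the local dimension of $\nu$ exceeds $\dim_{\rm H}\Lambda(\Phi)-\epsilon$ almost everywhere. If $J$ can be taken to consist of non-parabolic indices, then $\Phi_J$ is uniformly contracting and (A2) applies without caveat; if attaining the dimension forces $J$ to contain a parabolic index $p$, I would instead pass to the \emph{induced} (first-return) alphabet that collapses each run $p^{\,r}q$ into a single symbol, so that distortion is again controlled. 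In either case the outcome is the same: cylinders $[a_1\cdots a_n]$ with $a_j\in J$ have diameter comparable to $\prod_{j\le n}|\phi'_{a_j}|$ up to a fixed constant, with $\nu$-mass decaying at the exponential rate $\dim_{\rm H}\Lambda(\Phi)-\epsilon$.

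Next comes the insertion scheme that pins the growth of $L_n$. I would fix an increasing sequence of positions $m_1<m_2<\cdots$ together with admissible digits $b_k\in I$, and consider the sequences $\omega\in I^{\mathbb N}$ with $\omega_{m_k}=b_k$ and $\omega_j\in J$ for $j\notin\{m_k\}$. The digits $b_k$ steer the running maximum: for $\alpha\in(0,\infty)$ take $b_k\approx\alpha m_k/\log\log m_k$; for $\alpha=\infty$ take $b_k$ lacunary and enormous; for $\alpha=0$ no insertions are needed, since every $x\in\Lambda_*(\Phi_J)$ already has bounded digits and hence $L_n(x)\log\log n/n\to0$. The positions $\{m_k\}$ must meet two competing requirements: they must be multiplicatively dense, $m_{k+1}/m_k\to1$, so that $L_n$ tracks $\alpha n/\log\log n$ and the genuine limit (not merely a one-sided limit) equals $\alpha$; yet they must be sparse, in the sense that $\tfrac1N\sum_{m_k\le N}\log b_k\to0$, so that the forced digits cost no dimension. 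A choice such as $m_{k+1}/m_k=1+1/\log k$ meets both, since then the number of insertions up to $N$ is $O(\log N\,\log\log N)$, while $d$-decay bounds the per-insertion loss $-\log|\phi'_{b_k}|$ by $d\log b_k+O(1)\lesssim d\log N$.

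Finally I would distribute $\nu$ along the free coordinates to obtain a measure $\mu$ on $E=\Pi(\{\omega\colon\omega_{m_k}=b_k,\ \omega_j\in J\ (j\neq m_k)\})\subseteq L(\alpha)$ and apply the mass distribution principle. Because the inserted $b_k$ are non-parabolic (there being only finitely many parabolic indices) and never repeated consecutively, (A2) yields bounded distortion across each inserted symbol, so the diameter of a generation-$n$ cylinder of $E$ factors as the $J$-contribution times $\prod_{m_k\le n}|\phi'_{b_k}|$, and the logarithm of the second factor is $o(n)$ by sparsity. Comparing $\log\mu$ with $\log(\text{diameter})$ over these cylinders then gives local dimension at least $\dim_{\rm H}\Lambda(\Phi)-2\epsilon$ at $\mu$-almost every point, whence $\dim_{\rm H}E\ge\dim_{\rm H}\Lambda(\Phi)-2\epsilon$. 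The step I expect to be genuinely hard is the first one: securing a near-full-dimensional reference measure with uniform distortion control in the presence of the neutral fixed points, that is, showing that the dimension lost by excising, or by bounding the length of, the parabolic runs is arbitrarily small. This is precisely where the paper's dimension theory for infinitely-branched non-uniformly expanding maps, rather than the classical bounded-distortion theory used by Wu and Xu, becomes indispensable.
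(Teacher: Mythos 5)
Your proposal has the same two-step skeleton as the paper's proof: approximate $\dim_{\rm H}\Lambda(\Phi)$ by a finite, uniformly contracting subsystem with controlled distortion, then insert sparsely placed digits of size $\approx\alpha\,\tau(n)$, $\tau(x)=x/\log\log x$, at positions whose successive ratios tend to $1$, and show the insertions cost no dimension. The substantive divergence, and the one genuine soft spot, is in your first step. The paper does not produce a finite sub-alphabet $J\subseteq I$ of original letters; it produces a finite set $W_p\subseteq I^p$ of length-$p$ \emph{words}, each ending in a fixed non-parabolic letter (Proposition~\ref{katok}, built on the Katok-type Lemma~\ref{katok-lem} imported from \cite{JT}). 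This is exactly the device that resolves the dilemma you pose to yourself: your first alternative ($J$ consisting of non-parabolic letters) cannot reach dimensions arbitrarily close to $\dim_{\rm H}\Lambda(\Phi)$, because, as the paper points out after the statement of Theorem~\ref{mainthm}, the set of points whose digits avoid the parabolic indices has \emph{strictly} smaller dimension; and your second alternative (inducing on runs $p^{\,r}q$) produces an \emph{infinite} induced alphabet, so by itself it does not give the finite system your insertion argument needs---one must still truncate and show the truncation loses little dimension, which is the same approximation problem all over again. The paper's words-ending-in-a-non-parabolic-letter construction is morally your inducing idea (parabolic runs are hidden inside blocks), but it delivers finiteness, pairwise disjoint fundamental intervals, uniform contraction $e^{-\gamma p}$, and applicability of (A2) at every concatenation point in one stroke. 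You do correctly identify this as the hard step and correctly attribute it to the dimension theory behind (B1) (B2), so this is a deferred-and-misdescribed step rather than a wrong turn, but as written neither of your two routes completes it.

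The rest of your outline is sound and differs from the paper only in technique. You transfer dimension by putting a measure on the inserted set and applying the mass distribution principle; the paper instead shows the digit-deleting map $h_\varepsilon\colon L_p(\alpha)\to\Lambda(\Phi_p)$ is H\"older of exponent $1/(1+\varepsilon)$ (Proposition~\ref{prop-lip}, via Lemmas~\ref{compare} and~\ref{separate}) and invokes Lemma~\ref{Holder-lem}. These are equivalent in spirit, but note that passing from cylinder estimates to estimates for arbitrary nearby points---which your ball-versus-cylinder step in the mass distribution argument must also confront---uses the separation constant $K$ of \eqref{K-def}, whose positivity again depends on the \emph{finiteness} of $W_p$; this is one more reason the infinite induced alphabet would cause trouble. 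Your insertion bookkeeping (positions $m_k$ with $m_{k+1}/m_k\to 1$, hence $O(\log N\log\log N)$ insertions up to time $N$, each costing $\lesssim d\log N$ by $d$-decay) is correct and in fact sparser than the paper's choice $n(k)\asymp (k+p)^d$; both yield total derivative loss $o(N)$, which is all that is required, and your treatment of the cases $\alpha=0$ and $\alpha=\infty$ matches the paper's.
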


Conditions (B1) (B2) will be given in Section~\ref{ass}.
For any infinite parabolic IFS $\Phi$ on $[0,1]$, the set $\bigcap_{n=1}^{\infty}\phi_{\omega_1\cdots\omega_{n}}([0,1])$ is a singleton for any $\omega\in I^{\mathbb N}$ (see Lemma~\ref{unif-decay}). Hence, the limit set $\Lambda(\Phi)$ is defined. A key observation is that the Hausdorff dimension of the set
\[\{x\in\Lambda_*(\Phi)\colon a_n(x)\text{ is not parabolic for every $n\in\mathbb N$}\}\]
becomes strictly smaller than that of $\Lambda(\Phi)$ when $\Phi$ has a parabolic index. This means that we cannot exclude parabolic indices from consideration to establish the equality in Theorem~\ref{mainthm}.

To prove Theorem~\ref{mainthm}, it suffices to show the inequality $\dim_{\rm H}L(\alpha)\geq\dim_{\rm H}\Lambda(\Phi)$. To this end, we first extract 
from $\Phi$
a family of finite IFSs whose limit sets have Hausdorff dimension arbitrarily close to $\dim_{\rm H}\Lambda(\Phi)$.
We then construct a subset of $L(\alpha)$
 by inserting large digits into each limit set
without substantially losing Hausdorff dimension. 
This two-step construction has been 
inspired by the above-mentioned work of Wu and Xu \cite{WuXu09}. 
However, 
if $\Phi$ has parabolic indices then
the construction of a family of finite IFSs and the selection of positions of digits to be inserted have to be done carefully, in order to avoid the influence of parabolic indices on distortion estimates. We assume (B1) (B2) to facilitate these issues. 
These two conditions are translations from the paper \cite{JT} by Jaerisch and the author. For more details, see Section~\ref{ass}.

The rest of this paper is organized as follows.
In Section~2 we provide preliminary materials, including the definitions of (B1) (B2) and the construction of a family of finite IFSs. 
In Section~3 we complete the proof of  Theorem~\ref{mainthm}.
In Section~\ref{verify-s} we give a verifiable sufficient condition for (B1) (B2), and provide examples of parabolic IFSs satisfying them. 
\section{Preliminaries}
This section provides preliminary materials.
 In Section~\ref{ass} we introduce conditions (B1) (B2). In Section~\ref{Markov-sec} we translate a few results in \cite{JT} into the language of IFS. In Section~\ref{large-sec} we construct a family of finite IFSs with large limit sets, from a given parabolic IFS satisfying (B1) (B2).
\subsection{Saturation and mild distortion}\label{ass}
Let $\Phi=\{\phi_i\}_{i\in I}$ be a parabolic IFS. For each $n\in\mathbb N$ let $I^n$ denote the set of words from  
$I$ with word length $n$. 
Let $\mathcal M$ denote the set of shift invariant ergodic 
 Borel probability measures on the Cartesian product topological space $I^{\mathbb N}$. For each $\nu\in\mathcal M$, 
 define {\it the Lyapunov exponent} of $\nu$ 
 by 
 \[\chi(\nu)=-\int\log|\phi'_{\omega_1 }(\Pi(\omega))| d\nu(\omega)\in[0,\infty],\]
 and 
set \[\mathcal M(\Phi)=\left\{\nu\in \mathcal M\colon \chi(\nu)<\infty\right\}.\] For a Borel probability measure $\mu$ on $[0,1]$, define \[\dim(\mu)=\inf\{\dim_{\rm H}A\colon A\subset[0,1],\ \mu(A)=1\}.\]
\begin{itemize}
\item[(B1)] (Saturation)
  $\dim_{\rm H}\Lambda(\Phi)=\sup\left\{\dim(\nu\circ\Pi^{-1})\colon \nu\in\mathcal M(\Phi)\right\}.$
\end{itemize}

For each $n\in\mathbb N$
we define \[D_n(\Phi)=\sup_{\omega_1\cdots \omega_n\in I^n }\max_{x,y\in [0,1] }\log\frac{\phi'_{\omega_1\cdots \omega_n}(x)}{\phi'_{\omega_1\cdots \omega_n}(y)}.\]
\begin{itemize}
\item[(B2)] (Mild distortion)
 $D_1(\Phi)<\infty$ and  $D_n(\Phi)=o(n)$. 
\end{itemize}

The open set condition allows us to convert an IFS $\Phi$ on $[0,1]$ to iterations of a Bernoulli map on $[0,1]$, see Section~\ref{Markov-sec} for the definition.
Each neutral fixed point of this map corresponds to a parabolic index of $\Phi$.
This correspondence allows us to import results from the paper \cite{JT} on a dimension theory of non-uniformly expanding one-dimensional Markov maps with infinitely many branches. 
Conditions (B1) (B2) are translations from \cite{JT} into the language of IFS.
A verifiable sufficient condition for (B1) (B2) will be given in Proposition~\ref{sat-prop}.

\begin{remark}
The regular continued fraction is generated by the $2$-decaying IFS $\Phi=\{\phi_{i}\}_{i\in\mathbb N}$ on $[0,1]$ given by $\phi_i(x)=1/(x+i)$, which is a parabolic IFS without parabolic indices with
 $\Lambda(\Phi)=\Lambda_*(\Phi)=(0,1)\setminus\mathbb Q$:
 $x=\lim_{n\to\infty}\phi_{a_1(x)}\cdots\phi_{a_n(x)}(0)$ for all $x\in\Lambda(\Phi)$. 
 Condition (B1) holds because $\dim(\mu_{\rm G})=1$.
 Using the uniform contraction $\sup_{\omega\in\mathbb N^2}\max_{x\in[0,1]}|\phi_{\omega}'(x)|<1$ and so-called R\'enyi's condition, one can verify $\sup_{n\in\mathbb N}D_n(\Phi)<\infty$ which is stronger than (B2). As a result,
one can recover \cite[Theorem~1.1]{WuXu09} by applying Theorem~\ref{mainthm} to this IFS $\Phi$.\end{remark}

 \subsection{Entropy, dimension, decay of fundamental intervals}\label{Markov-sec}
We say $f\colon\varDelta\to [0,1]$ is {\it a Markov map}
  if the following three conditions hold:
 
 \begin{itemize}
 
\item[(M0)]
   there exist a subset $I$ of $\mathbb N$ and a family $\{\varDelta_i\}_{i\in I}$ 
of pairwise disjoint non-empty open intervals in $(0,1)$
such that $\varDelta=\bigcup_{i\in I}\varDelta_i$;

 \item[(M1)] for each $i\in I$, the restriction  
 $f|_{\varDelta_i}$
extends to a $C^1$ diffeomorphism from the closure of $\varDelta_i$ onto its image; 
\item[(M2)] $f(\varDelta_i)\cap\varDelta_j\neq\emptyset$ for $i,j\in I$ implies $\Delta_j\subset f(\varDelta_j)$.
\end{itemize}
We say $f\colon\Delta\to[0,1]$ is {\it a Bernoulli map} if (M0) (M1), and the following holds: 
\begin{itemize}
 \item[(M3)]for every $i\in I$, 
  $f(\varDelta_i)=(0,1)$.
  \end{itemize}

 Given a parabolic IFS $\Phi=\{\phi_i\}_{i\in I}$ on $[0,1]$, we can associate
  {\it a Bernoulli map} $f\colon \bigcup_{i\in I}\phi_i((0,1))\to(0,1)$ by
 $f|_{\phi_i((0,1))}=\phi_i^{-1}|_{\phi_i((0,1))}$.
 The parabolic indices of $\Phi$ correspond to neutral fixed points of $f$,  $\Lambda(\Phi)$ contains $\bigcap_{n=0}^\infty f^{-n}(\varDelta)$, and $\Lambda(\Phi)\setminus\bigcap_{n=0}^\infty f^{-n}(\varDelta)$ is a countable set.
 For each $\nu\in\mathcal M$, the measure $\nu\circ\Pi^{-1}$ is $f$-invariant. Let $h(\nu)$ denote the measure-theoretic entropy of $\nu\circ\Pi^{-1}$ with respect to $f$. We say $\nu$ is {\it ergodic} if 
 $\nu\circ\Pi^{-1}$ is ergodic with respect to $f$,
 and is {\it expanding} if $\chi(\nu)>0$.
If $\nu\in\mathcal M(\Phi)$ is ergodic and expanding, then 
\begin{equation}\label{dim-formula}\dim(\nu\circ\Pi^{-1})=\frac{h(\nu)}{\chi(\nu)},\end{equation}
see e.g., \cite{JT} and \cite[Section~4.4]{MauUrb03} for details.

Let $\Phi=\{\phi_i\}_{i\in I}$ be a parabolic IFS on $[0,1]$.
For each $n\in\mathbb N$
and $\omega=(\omega_1,\ldots,\omega_n)\in I^n$,
we call the closed interval
\[J(\omega)=J(\omega_1,\ldots,\omega_n)=\phi_{\omega_1\cdots \omega_n}([0,1])\]
 {\it a fundamental interval} of order $n$.
For convenience, let us call $[0,1]$ a fundamental interval of order $0$.
Fundamental intervals of the same order are either disjoint, coincide or intersect only at their boundary points.

We say $\Phi$ has
{\it decay of fundamental intervals} if $\bigcap_{n=1}^{\infty}\phi_{\omega_1\cdots\omega_{n}}([0,1])$ is a singleton for any $\omega\in I^{\mathbb N}$.
 We say $\Phi$ has {\it uniform decay of fundamental intervals} 
if
\[\displaystyle{\lim_{n\to\infty}}\sup_{\omega\in I^n}|J(\omega)|=0.\]

\begin{lemma}\label{unif-decay}
Every parabolic IFS $\Phi$ has decay of fundamental intervals.
If moreover $D_1(\Phi)<\infty$, then $\Phi$ has uniform decay of fundamental intervals.
\end{lemma}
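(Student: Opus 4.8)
The plan is to establish the two assertions separately, in both cases reducing everything to two elementary facts: each $\phi_i$ is a diffeomorphism onto its image, so $|\phi_i(B)|=\int_B|\phi_i'|$ for every subinterval $B$, and each composition $\phi_{\omega_1\cdots\omega_n}$ is non-expanding because every factor satisfies $|\phi_i'|\le1$ by (A1). Throughout write $\sigma$ for the left shift on $I^{\mathbb N}$ and, for $\tau\in I^{\mathbb N}$, set $J_\infty(\tau)=\bigcap_{n=1}^\infty J(\tau_1,\ldots,\tau_n)$; this is a nonempty compact interval, and proving decay of fundamental intervals amounts to showing $|J_\infty(\tau)|=0$ for every $\tau$.

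For the first assertion I would argue by contradiction, assuming $c_0:=|J_\infty(\omega)|>0$ for some $\omega$. The starting point is the covariance relation $J_\infty(\tau)=\phi_{\tau_1}(J_\infty(\sigma\tau))$, obtained by passing the continuous injection $\phi_{\tau_1}$ through the nested intersection. Applying this with $\tau=\sigma^k\omega$ and using non-expansion shows that $c_k:=|J_\infty(\sigma^k\omega)|$ is non-decreasing in $k$ and bounded by $1$, hence $c_k\uparrow c\le 1$ with all $c_k\ge c_0>0$. Since $J_\infty(\sigma^k\omega)\subset\phi_{\omega_{k+1}}([0,1])=J(\omega_{k+1})$, each symbol $\omega_{k+1}$ lies in $\{i\in I:|J(i)|\ge c_0\}$, which is finite because the intervals $J(i)$ have pairwise disjoint interiors and total length at most $1$ by the open set condition. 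The main obstacle is now precisely the non-uniform contraction at parabolic points, where a symbol whose derivative is close to $1$ contracts almost nothing. To overcome it I would use the telescoping identity
\[c_{k+1}-c_k=\int_{J_\infty(\sigma^{k+1}\omega)}\bigl(1-|\phi'_{\omega_{k+1}}|\bigr),\]
whose summability forces these deficits to tend to $0$. By the pigeonhole principle some index $i^*$ of the finite alphabet occurs as $\omega_{k+1}$ for infinitely many $k$, along which the integrals run over intervals of length $c_{k+1}\ge c_0$. Since $1-|\phi'_{i^*}|$ is continuous, non-negative, and vanishes at most at the single (parabolic) fixed point of $\phi_{i^*}$, its integral over any interval of length $\ge c_0$ is bounded below by a positive constant; this contradicts the deficits tending to $0$, so $c_0=0$.

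For the second assertion the key elementary estimate is that for every word $\omega_1\cdots\omega_n$ and every $k$ one has $|J(\omega_1,\ldots,\omega_n)|\le\sup_{x}|\phi'_{\omega_k}(x)|$: factoring $\phi_{\omega_1\cdots\omega_n}=\phi_{\omega_1\cdots\omega_{k-1}}\circ\phi_{\omega_k}\circ\phi_{\omega_{k+1}\cdots\omega_n}$ and using that the outer and inner compositions are non-expanding reduces the length to the contribution of the single factor $\phi_{\omega_k}$. This is where $D_1(\Phi)<\infty$ enters: bounded single-map distortion gives $\sup_x|\phi'_i(x)|\le e^{D_1(\Phi)}\inf_x|\phi'_i(x)|\le e^{D_1(\Phi)}|J(i)|$, and since $\sum_i|J(i)|\le1$ the quantities $\sup_x|\phi'_i(x)|$ tend to $0$ as $i\to\infty$, so for each $s>0$ the set $G_s=\{i:\sup_x|\phi'_i(x)|\ge s\}$ is finite. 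To conclude I would suppose uniform decay fails, so that $\sup_{\omega\in I^n}|J(\omega)|\downarrow s>0$ and there exist words $\omega^{(n)}\in I^n$ with $|J(\omega^{(n)})|\ge s$; the elementary estimate forces every symbol of every $\omega^{(n)}$ to lie in the finite set $G_s$. On the compact space $G_s^{\mathbb N}$ the functions $\omega\mapsto|J(\omega_1,\ldots,\omega_m)|$ are continuous, non-increasing in $m$, and converge pointwise to $0$ by the first assertion, so Dini's theorem yields uniform convergence, whence $\sup_{\omega\in G_s^m}|J(\omega)|\to0$, contradicting $|J(\omega^{(n)})|\ge s$. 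I expect the parabolic deficit estimate in the first assertion to be the principal technical point, the infinite-alphabet reduction in the second being a soft consequence of $D_1(\Phi)<\infty$ together with Dini's theorem.
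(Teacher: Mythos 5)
Your proof is correct, but it takes a genuinely different route from the paper's. The paper disposes of this lemma in two lines: it passes to the Bernoulli map $f$ associated with $\Phi$ (the inverse branches $\phi_i^{-1}$ on $\phi_i((0,1))$), observes that fundamental intervals of order $n$ are the closures of the $n$-cylinders of $f$, and invokes Proposition~3.1 of \cite{JT}; this is consistent with how the rest of the paper operates, since the same machinery from \cite{JT} is reused for Lemma~\ref{katok-lem} and Proposition~\ref{sat-prop}. You instead give a self-contained elementary argument, and its two pillars are sound: for decay, the monotone quantities $c_k=|J_\infty(\sigma^k\omega)|$, the summable deficits $c_{k+1}-c_k\ge\int_{J_\infty(\sigma^{k+1}\omega)}\bigl(1-|\phi'_{\omega_{k+1}}|\bigr)$, the pigeonhole on the finite set $\{i\in I\colon |J(i)|\ge c_0\}$ (finite by the open set condition), and the compactness lower bound for $\int_B\bigl(1-|\phi'_{i^*}|\bigr)$ over intervals with $|B|\ge c_0$ handle the parabolic indices exactly where a naive uniform-contraction argument would break, which is the crux of the statement; for uniform decay, the reduction to the finite alphabet $G_s$ via $\sup_x|\phi_i'(x)|\le e^{D_1(\Phi)}\inf_x|\phi_i'(x)|\le e^{D_1(\Phi)}|J(i)|$ followed by Dini's theorem on the compact product $G_s^{\mathbb N}$ is a clean soft argument. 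What your approach buys is independence from the external reference; what the paper's buys is brevity. Two cosmetic points to tighten: (i) state the telescoping relation as the inequality $c_{k+1}-c_k\ge\int_{J_\infty(\sigma^{k+1}\omega)}\bigl(1-|\phi'_{\omega_{k+1}}|\bigr)$ rather than an identity, since that is all you use and it holds without worrying about injectivity of $\phi_{\omega_{k+1}}$ (injectivity is only implicit in the hypotheses, via (A2) and $D_1(\Phi)<\infty$); (ii) since $\sup_{\omega\in I^n}|J(\omega)|$ need not be attained over an infinite alphabet, you should pick $\omega^{(n)}$ with $|J(\omega^{(n)})|\ge s/2$, say, and run the Dini argument with $G_{s/2}$; the contradiction is unchanged.
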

\begin{proof} This follows from \cite[Proposition~3.1]{JT} applied to the Bernoulli map  associated with the IFS $\Phi$ in the lemma. 
For each $n\in\mathbb N$, fundamental intervals of order $n$ are the closures of 
the $n$-cylinders in the language of \cite{JT}. \end{proof}

  \begin{lemma}\label{katok-lem}
    Let $\Phi=\{\phi_i\}_{i\in I}$ be a parabolic IFS satisfying (B1) (B2).
  Let
$\nu\in\mathcal M(\Phi)$ be ergodic and expanding.
For any $\varepsilon>0$ there exists 
 $p_0\geq2$ such that for every integer $p\geq p_0$
there exists
a finite subset $V_p$ of $I^{p}$ such that 
\[
\left|\frac{1}{p}\log\#V_p- h(\nu)\right|<\varepsilon\ \text{ 
and }\
\max_{\omega\in V_p}\max_{x\in[0,1] }\left|\frac{1}{p}\log |\phi_\omega'(x)|^{-1}-\chi(\nu)\right|<\varepsilon.\]
\end{lemma}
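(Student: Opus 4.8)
The plan is to transfer the problem to the Bernoulli map $f$ associated with $\Phi$ and its invariant measure $\mu=\nu\circ\Pi^{-1}$, and then run a Katok-type counting argument built on the Shannon--McMillan--Breiman (SMB) theorem and Birkhoff's ergodic theorem, with condition (B2) used to make the Lyapunov estimate uniform in the base point.

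First I would record the dictionary between the symbolic system $(\sigma,\nu)$ and the smooth system $(f,\mu)$. Since $\Pi$ conjugates the shift $\sigma$ to $f$ and $\nu$ is ergodic and expanding, $\mu$ is an ergodic $f$-invariant measure with $h_\mu(f)=h(\nu)$ and Lyapunov exponent $\int\log|f'|\,d\mu=\chi(\nu)\in(0,\infty)$; finiteness of $h(\nu)$ follows from the Ruelle--Margulis inequality $h(\nu)\le\chi(\nu)<\infty$. Using the cocycle identity $\Pi(\sigma^k\omega)=\phi_{\omega_{k+1}\cdots\omega_p}(\Pi(\sigma^p\omega))$ together with $(f^p)'(\Pi(\omega))=\phi'_{\omega_1\cdots\omega_p}(\Pi(\sigma^p\omega))^{-1}$, Birkhoff's theorem gives, for $\nu$-a.e.\ $\omega$, the convergence $-\frac1p\log|\phi'_{\omega_1\cdots\omega_p}(\Pi(\sigma^p\omega))|\to\chi(\nu)$, while SMB applied to the generating partition of $I^{\mathbb N}$ into branches gives $-\frac1p\log\nu([\omega_1\cdots\omega_p])\to h(\nu)$.

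The next step converts the pointwise-in-$\omega$ Lyapunov statement into one uniform in $x\in[0,1]$, and this is exactly where (B2) enters. Since $D_p(\Phi)=o(p)$, for every word the quantities $-\frac1p\log|\phi'_{\omega_1\cdots\omega_p}(x)|$ differ from one another by at most $D_p(\Phi)/p\to0$ as $x$ ranges over $[0,1]$; hence $-\frac1p\log|\phi'_{\omega_1\cdots\omega_p}(x)|\to\chi(\nu)$ for $\nu$-a.e.\ $\omega$, uniformly in $x$, and both the entropy and the Lyapunov conditions may then be phrased in terms of the cylinder $[\omega_1\cdots\omega_p]$ alone. Fixing $\varepsilon$, I would take $V_p\subset I^p$ to be the set of words $w$ with $e^{-p(h(\nu)+\varepsilon/2)}\le\nu([w])\le e^{-p(h(\nu)-\varepsilon/2)}$ and $\max_{x\in[0,1]}\bigl|-\frac1p\log|\phi'_w(x)|-\chi(\nu)\bigr|<\varepsilon$. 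Because a.e.\ convergence implies convergence in measure on the finite measure space $(I^{\mathbb N},\nu)$, the two displayed limits force $\nu\bigl(\bigcup_{w\in V_p}[w]\bigr)\to1$, so this measure exceeds $\tfrac12$ once $p\ge p_0$.

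Finally the cardinality estimate is a one-line consequence of the disjointness of the cylinders $[w]$, $w\in V_p$: summing the two-sided bounds on $\nu([w])$ against $\tfrac12\le\sum_{w\in V_p}\nu([w])\le1$ yields $\tfrac12\,e^{p(h(\nu)-\varepsilon/2)}\le\#V_p\le e^{p(h(\nu)+\varepsilon/2)}$, whence $\bigl|\frac1p\log\#V_p-h(\nu)\bigr|<\varepsilon$ after enlarging $p_0$ to absorb $\frac{\log2}{p}$; the Lyapunov clause is built into the definition of $V_p$. The main obstacle is precisely the uniform-in-$x$ control of the derivatives of long compositions: near a parabolic index the distortion over a single branch is not bounded, so an honest bounded-distortion argument is unavailable, and the role of (B2) is to guarantee instead that this distortion is subexponential, which is all the $\frac1p$-scaling needs. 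A secondary point to treat with care is the validity of SMB for the countable branch partition, which holds in the present expanding setting and is among the results imported from \cite{JT}.
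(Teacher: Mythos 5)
Your proof is correct and is essentially the argument the paper imports wholesale: the paper's own ``proof'' is a one-line citation to the proof of \cite[Lemma~3.5]{JT}, which is precisely this Katok-type selection of cylinders via the Shannon--McMillan--Breiman theorem, Birkhoff's theorem, and convergence in measure, with (B2) (i.e.\ $D_p(\Phi)=o(p)$) promoting the pointwise Lyapunov estimate to one uniform in $x\in[0,1]$, and the cardinality bounds falling out of the two-sided cylinder-measure estimates exactly as you write them. The one soft spot is the point you flag yourself---SMB and the Kolmogorov--Sinai identity for the countable branch partition $P=\{[i]\}_{i\in I}$ require $H_\nu(P)<\infty$, which is not implied by $h(\nu)<\infty$ alone---but it does hold here and deserves the short argument: by Gibbs' inequality applied against the sub-probability vector $(|J(i)|)_{i\in I}$ (summable by the open set condition) and the bound $-\log|J(i)|\le D_1(\Phi)-\log\max_{x}|\phi_i'(x)|$, one gets $H_\nu(P)\le D_1(\Phi)+\chi(\nu)<\infty$ for every $\nu\in\mathcal M(\Phi)$.
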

\begin{proof}This follows from the proof of \cite[Lemma~3.5]{JT} applied to the Bernoulli map  associated with the IFS $\Phi$ in the lemma. \end{proof}

\subsection{Construction of finite IFSs with large limit sets}\label{large-sec}
The next proposition provides a family of finite IFSs without parabolic indices
whose limit sets approximate that of the original parabolic IFS in terms of Hausdorff dimension.
\begin{prop}\label{katok} Let $\Phi=\{\phi_i\}_{i\in I}$ be a parabolic IFS on $[0,1]$ that satisfies (B1) (B2).
For any $\varepsilon>0$ there exist 
 $p_0\geq2$ such that for every integer $p\geq p_0$ there exist a constant $\gamma>0$, 
a non-empty finite subset $W_{p}$ of $I^{p}$ with the following properties:

\begin{itemize}

\item[(a)] for all distinct words $\omega,\eta\in W_{p}$, $J(\omega)\cap J(\eta)=\emptyset;$

\item[(b)] for any $\omega=\omega_1\cdots \omega_{p}\in  W_{p}$, $\omega_p$ is not a parabolic index;

\item[(c)] for any $\omega\in W_{p}$, $\max_{x\in[0,1]}|\phi'_{\omega}(x)|<e^{-\gamma p}$;

\item[(d)]
for the finite IFS
$\Phi_{{p}}=\{\phi_{\omega} \}_{\omega\in 
W_{p} }$ on $[0,1]$ without parabolic indices, 
\[\dim_{\rm H}\Lambda(\Phi_{{p} })>\dim_{\rm H}\Lambda(\Phi)-\varepsilon.\]
\end{itemize}
\end{prop}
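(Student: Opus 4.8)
The plan is to isolate a single ergodic, \emph{expanding} measure whose projection nearly realizes $\dim_{\rm H}\Lambda(\Phi)$, to convert the Katok-type approximation of Lemma~\ref{katok-lem} into a finite family of words, and then to append one fixed non-parabolic symbol so as to secure (a) and (b) simultaneously without degrading the entropy and Lyapunov estimates; the dimension lower bound (d) is obtained at the end by a mass-distribution argument.

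First I would record a reduction: in the supremum of (B1) only expanding measures matter. Indeed, for the associated Bernoulli map $f$ one has $\log|f'|\ge 0$, with equality only at the finitely many parabolic fixed points (there $|\phi_i'|=1$). Hence if $\nu\in\mathcal M(\Phi)$ is ergodic with $\chi(\nu)=0$, then $\log|f'|=0$ holds $\nu\circ\Pi^{-1}$-a.e., forcing $\nu\circ\Pi^{-1}$ to be a point mass at a parabolic fixed point, so $\dim(\nu\circ\Pi^{-1})=0$. Assuming $\dim_{\rm H}\Lambda(\Phi)>0$ (the case $=0$ being trivial, as a one-element $W_p$ already satisfies (a)--(d)), I fix for the given $\varepsilon>0$ an ergodic expanding $\nu\in\mathcal M(\Phi)$ with $\dim(\nu\circ\Pi^{-1})=h(\nu)/\chi(\nu)>\dim_{\rm H}\Lambda(\Phi)-\varepsilon/2$ by (B1) and \eqref{dim-formula}, and then choose $\varepsilon'>0$ so small that $(h(\nu)-2\varepsilon')/(\chi(\nu)+\varepsilon')>h(\nu)/\chi(\nu)-\varepsilon/2$.

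Next I fix a non-parabolic index $j_0\in I$ with $\overline{\phi_{j_0}([0,1])}\subset(0,1)$; such $j_0$ exists since only finitely many of the tiles $\phi_i([0,1])$ can meet $\{0,1\}$ and only finitely many indices are parabolic. Applying Lemma~\ref{katok-lem} to $\nu$ with parameter $\varepsilon'$ yields an integer $q_0$; for every $p\ge q_0+1$ it provides, at word length $p-1$, a finite set $V_{p-1}\subset I^{p-1}$ with $\frac1{p-1}\log\#V_{p-1}\approx h(\nu)$ and $\frac1{p-1}\log|\phi'_\eta(x)|^{-1}\approx\chi(\nu)$ uniformly in $x\in[0,1]$. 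I set
\[
W_p=\{\,\eta j_0:\eta\in V_{p-1}\,\}\subset I^{p}.
\]
Then (b) is immediate. For (a), since $\overline{\phi_{j_0}([0,1])}\subset(0,1)$ we get $J(\eta j_0)=\phi_\eta(J(j_0))\subset\phi_\eta(\,(0,1)\,)=\operatorname{int}J(\eta)$, and distinct words of the same order have disjoint interiors by the open set condition, so the $J(\eta j_0)$ are pairwise disjoint. For (c), the chain rule and the uniform Katok estimate give $\max_x|\phi'_{\eta j_0}(x)|\le(\max_x|\phi'_{j_0}(x)|)\,e^{-(p-1)(\chi(\nu)-\varepsilon')}<e^{-\gamma p}$ for large $p$, with $\gamma=(\chi(\nu)-\varepsilon')/2>0$ (here expandingness of $\nu$ is used).

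Finally, for (d) I would bound $\dim_{\rm H}\Lambda(\Phi_p)$ from below. The finite IFS $\Phi_p=\{\phi_{\eta j_0}\}_{\eta\in V_{p-1}}$ obeys the open set condition by (a) and is uniformly contracting by (c), hence is a finite conformal IFS of bounded geometry with $\Lambda(\Phi_p)\subset\Lambda(\Phi)$. The two-sided Katok estimate gives, for each generator $\psi$ and all $x$, $e^{-p(\chi(\nu)+\varepsilon')}\lesssim|\psi'(x)|\le e^{-\gamma p}$. Pushing the uniform Bernoulli measure on $W_p^{\mathbb N}$ to $\mu_*$ on $\Lambda(\Phi_p)$, the chain rule yields $\mu_*(J)=(\#W_p)^{-n}$ and $|J|\ge c^{\,n}e^{-np(\chi(\nu)+\varepsilon')}$ for each order-$n$ fundamental interval $J$ of $\Phi_p$, with $c>0$ a constant. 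By the standard lower-bound principle for self-conformal sets with the open set condition,
\[
\dim_{\rm H}\Lambda(\Phi_p)\ \ge\ \frac{\log\#W_p}{p(\chi(\nu)+\varepsilon')+O(1)}\ \ge\ \frac{h(\nu)-2\varepsilon'}{\chi(\nu)+\varepsilon'}-o(1),
\]
which by the choice of $\varepsilon'$ exceeds $\dim_{\rm H}\Lambda(\Phi)-\varepsilon$ for all large $p$, giving (d). The hard part is precisely this lower bound: one needs an honest estimate surviving near the parabolic fixed points, and for that it is essential that Lemma~\ref{katok-lem} delivers the derivative control \emph{uniformly in $x$} rather than merely $\nu$-almost everywhere; the accompanying technical point is to arrange (a) and (b) by appending the single interior symbol $j_0$ while keeping $\frac1p\log\#W_p$ essentially unchanged.
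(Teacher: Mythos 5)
Your proposal is correct in the setting that actually matters (an infinite parabolic IFS, which is the case used in the proof of Theorem~\ref{mainthm}), and its first half coincides with the paper's: both select an ergodic expanding $\nu\in\mathcal M(\Phi)$ via (B1) and \eqref{dim-formula}, apply Lemma~\ref{katok-lem} at word length $p-1$, and append a fixed non-parabolic symbol to secure (b) and (c); your preliminary reduction showing that zero-exponent ergodic measures project to point masses (hence have dimension zero) is a small gap-fill that the paper leaves implicit. The genuine divergence is in how you obtain (a) and, above all, (d). For (a), the paper keeps the appended non-parabolic letter $a$ arbitrary and instead discards every other interval in the collection $\{J(\omega_1,\ldots,\omega_{p-1},a)\}$ to remove endpoint intersections (losing a harmless factor $2$ in cardinality), whereas your interior-symbol trick $J(\eta j_0)\subset{\rm int}\,J(\eta)$ produces strong separation outright; note your trick needs a non-parabolic $j_0$ with $\phi_{j_0}([0,1])\subset(0,1)$, which is automatic for infinite $I$ (by the open set condition at most two tiles touch $\{0,1\}$) but is a stronger requirement than the paper's. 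For (d), the paper stays ergodic-theoretic: it takes the measure of maximal entropy of the subsystem, spreads it into a shift-invariant ergodic expanding measure $\tilde\nu\in\mathcal M(\Phi)$, computes $h(\tilde\nu)=\frac1p\log\#W_p$, bounds $\chi(\tilde\nu)$, and invokes \eqref{dim-formula} a second time together with a full-measure-union argument. You instead run an elementary mass-distribution argument (Falconer's lower bound \cite{Fal14} for a finite, uniformly bi-Lipschitz, strongly separated IFS), using that Lemma~\ref{katok-lem} controls $|\phi'_\eta(x)|$ from both sides uniformly in $x$, to get $\dim_{\rm H}\Lambda(\Phi_p)\ge\log\#W_p/\bigl((p-1)(\chi(\nu)+\varepsilon')+\log(1/c)\bigr)$, which suffices for large $p$. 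Each route buys something: yours is self-contained and avoids re-invoking the dimension formula for the parabolic system (a nontrivial result imported from \cite{JT}), but it relies on the strong separation you engineered and on the uniformity in $x$ of the Katok estimates; the paper's avoids all covering/geometric arguments and needs only disjointness and an arbitrary non-parabolic terminal symbol, at the cost of verifying that $\tilde\nu$ lies in $\mathcal M(\Phi)$ and is ergodic and expanding.
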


\begin{proof}Fix an index $a\in I$ that is not parabolic. By (B1) and \eqref{dim-formula}, for any $\varepsilon>0$ there exists an ergodic expanding measure $\nu\in \mathcal M(\Phi)$ such that \begin{equation}\label{katok-0}\frac{h(\nu)}{\chi(\nu)}>\dim_{\rm H}\Lambda(\Phi)-\frac{\varepsilon}{2}.\end{equation}
Let $\delta\in(0,h(\nu))$. By Lemma~\ref{katok-lem},
for all sufficiently large integer $p\geq2$ there exists a non-empty finite subset $V_{p-1}$ of $I^{p-1}$ such that 
\begin{equation}\label{katok-eq1}\left|\frac{1}{p-1}\log\#V_{p-1}-h(\nu)\right|<\frac{\delta}{2},\end{equation} and for every $\omega\in V_{p-1}$,
\begin{equation}\label{katok-eq2}\max_{x\in [0,1] }\left|\frac{1}{p-1}\log |\phi_\omega'(x)|^{-1}-\chi(\nu)\right|<\frac{\delta}{2}.\end{equation}
The fundamental intervals
in the collection $\{J(\omega_1,\ldots,\omega_{p-1},a)\colon \omega_1\cdots \omega_{p-1}\in V_{p-1}\}$ 
may not be pairwise disjoint, intersecting each other at their endpoints. We remove from this collection every other interval with respect to the natural order in $[0,1]$. 
What is left is the collection of pairwise disjoint fundamental intervals of order $p$ containing at least $\lfloor(\#V_{p-1})/2\rfloor$ elements. Define $W_p$ to be the collection of words in $I^p$ that correspond to these remaining intervals. 
  Clearly (a) (b) hold.
  For all sufficiently large $p$,
 the estimates in \eqref{katok-eq1} and \eqref{katok-eq2} remain intact. Indeed we have
\begin{equation}
\begin{split}\label{katok-eq3}\left|\frac{1}{p}\log\#W_p-h(\nu)\right|\leq&\frac{1}{p(p-1)}\log\#W_{p}+\frac{1}{p-1}\left|\log\#W_{p-1}-\log\#V_{p-1}\right|\\
&+\left|\frac{1}{p-1}\log\#V_{p-1}-h(\nu)\right|<\delta,\end{split}\end{equation}
and for every $\omega\in W_p$, \begin{equation}
\begin{split}\label{katok-eq4}\max_{ x\in[0,1]}\left|\frac{1}{p}\log |\phi'_\omega(x)|^{-1}-\chi(\nu)\right|&\leq\frac{1}{p(p-1)}\max_{ x\in[0,1] }\log |\phi'_\omega(x)|^{-1}\\&+\max_{ x\in[0,1] }\left|\frac{1}{p-1}\log |\phi'_\omega(x)|^{-1}-\chi(\nu)\right|<\delta.\end{split}\end{equation}
 Item (c) follows from \eqref{katok-eq4}
 by setting $\gamma=\chi(\nu)-\delta>0$.

For convenience we set $I^0=\{0\}$, and set $\phi_{0}$ to be the identity map on $[0,1]$.
Let $\nu_{\rm max}$ denote the measure of maximal entropy for the restriction of the $p$-iteration of the associated Bernoulli map to
$\Lambda(\Phi_p)$. The measure 
$\tilde\nu=(1/p)\sum_{i=0}^{p-1}\sum_{\omega\in I^i}\nu_{\rm max}\circ\phi_{\omega}$ belongs to $\mathcal M(\Phi)$, ergodic, expanding and 
so $\dim(\tilde\nu)=h(\tilde\nu)/\chi(\tilde\nu)$ by \eqref{dim-formula}. Moreover we have 
\begin{equation}\label{katok-eq5}h(\tilde\nu)=\frac{1}{p}\log\#W_p\ \text{ and }\ \chi(\tilde\nu)\leq \frac{1}{p}\max_{\omega\in W_p}\max_{x\in[0,1]}|\phi'_{\omega}(x)|^{-1}.\end{equation}
Combining \eqref{katok-eq3}, \eqref{katok-eq4} and
\eqref{katok-eq5} yields
\[\dim(\tilde\nu)=\frac{h(\tilde\nu)}{\chi(\tilde\nu)}>\frac{h(\nu)-\delta}{\chi(\nu)+\delta}.\]
Since the set 
$\bigcup_{i=0}^{p-1}\bigcup_{\omega\in I^i}\phi_\omega(\Lambda(\Phi_{p}))$ has full
$\tilde\nu$-measure, we have
\[\dim_{\rm H}\left(\bigcup_{i=0}^{p-1}\bigcup_{\omega\in I^i}\phi_\omega(\Lambda(\Phi_{p}))\right)\geq\dim(\tilde\nu).\] Combining these two inequalities yields
 \begin{equation}\label{katok-fin}\dim_{\rm H}\Lambda(\Phi_{p})=\dim_{\rm H}\left(\bigcup_{i=0}^{p-1}\bigcup_{\omega\in I^i}\phi_\omega(\Lambda(\Phi_{p}))\right)>\frac{h(\nu)-\delta}{\chi(\nu)+\delta}.\end{equation}
Since $\delta>0$ is arbitrary, combining \eqref{katok-0}, \eqref{katok-fin} and then reducing $\delta$ if necessary we obtain the desired inequality in (d). \end{proof}

\section{Proof of the main result}
In this section we complete the proof of Theorem~\ref{mainthm}. 
\subsection{Initial setup}
Let $d>1$, and let $\Phi=\{\phi_i\}_{i\in I}$ be a $d$-decaying parabolic IFS on $[0,1]$ that satisfies (B1) (B2). To prove Theorem~\ref{mainthm} it suffices to show
$\dim_{\rm H}L(\alpha)\geq\dim_{\rm H}\Lambda(\Phi)$ for any $\alpha\in[0,\infty]$.
Let $\varepsilon>0$. Let $p\geq2$ be a sufficiently large integer such that
\begin{equation}\label{p} (k+p+1)^d+2p+1<(k+p+2)^d\ \text{ for every }k\in\mathbb N\cup\{0\}.\end{equation}
Let $W_p$ be 
a non-empty finite subset of $I^{p}$, and let $\gamma>0$ be a constant for which the conclusions of 
Proposition~\ref{katok} hold.
Let $\Phi_{p}=\{\phi_{\omega} \}_{\omega\in 
W_{p} }$ be a finite IFS on $[0,1]$ given by Proposition~\ref{katok}. Clearly the limit set of $\Phi_p$ is contained in $L(0)$.
Proposition~\ref{katok}(d) gives
\[\dim_{\rm H}L(0)\geq\dim_{\rm H}\Lambda(\Phi_p)>\dim_{\rm H}\Lambda(\Phi)-\varepsilon.\] Since $\varepsilon>0$ is arbitrary we obtain
 $\dim_{\rm H}L(0)\geq\dim_{\rm H}\Lambda(\Phi)$ as required.
In what follows we only consider the case $\alpha\in(0,\infty)$. The case $\alpha=\infty$ is covered by a slight modification of the following argument.

\subsection{Construction of a subset of the level set }\label{itineraries}
We construct a subset of the level set $L(\alpha)$ with large Hausdorff dimension by inserting digits into the limit set of $\Phi_p$. In order to
select positions of these digits,
we define
 sequences $\{n(k)\}_{k=0}^\infty$, $\{m(k)\}_{k=0}^\infty$ of non-negative integers inductively as follows. 
Start with $n(0)=0$. 
Given $n(k)$ for $k\geq0$ such that $n(k)+p< (k+p+1)^d$,
define \begin{equation}\label{n-ineq}n(k+1)=n(k)+m(k)p+1,\end{equation} where
$m(k)\geq2$ is the positive integer satisfying
\[n(k)+(m(k)-1)p\leq (k+p+1)^d<n(k)+m(k)p.\]
By \eqref{p} and \eqref{n-ineq} we have
\[n(k+1)+p= n(k)+m(k)p+p+1\leq (k+p+1)^d+2p+1<(k+p+2)^d,\]
which proves the assumption of induction.
The above definition implies
\begin{equation}\label{nk-diff}2\leq n(k)-(k+p)^d\leq p\ \text{ for every }k\geq1.\end{equation}
From \eqref{nk-diff} we obtain
\begin{equation}\label{limitzero}\lim_{k\to\infty}\frac{k}{n(k)}=0.\end{equation} 

Define a function
 $\tau\colon [2,\infty)\to(0,\infty)$ by $\tau(x)=x/\log\log x$. 
 Notice that
\[\lim_{k\to\infty}\frac{\tau(k^d)}{\tau( (k+1)^d)}=1.\]
We have $\lim_{x\to\infty}\tau'(x)=0$.
Applying the mean value theorem to $\tau$ and then using \eqref{nk-diff}, we get
\[\lim_{k\to\infty}\frac{\tau(n(k))-\tau(k^d)}{\tau((k+1)^d)}=0.\]
These two equalities shows $\lim_{k\to\infty}\tau(n(k))/\tau((k+1)^d)=1$, and 
\begin{equation}\label{cal-eq}\lim_{k\to\infty}\frac{\tau(n(k))}{\tau(n(k+1))}=1.
\end{equation}

Let $y\in \Lambda(\Phi_p)$.
Then $\{a_n(y)\}_{n\in\mathbb N}$ is a concatenation of elements of $W_p$.
We insert the digits $a_{n(k)}=\lfloor \alpha \tau(n(k))\rfloor$, $k\in\mathbb N$ into $\{a_n(y)\}_{n\in\mathbb N}$ to define a new sequence 
 \[\begin{split}&\ldots,a_{(m(0)+\cdots+m(k-1))p-1}(y),a_{(m(0)+\cdots+m(k-1))p}(y),\fbox{$a_{n(k)}$},a_{(m(0)+\cdots+m(k-1))p+1}(y),\ldots\\
  &\ldots,a_{(m(0)+\cdots+m(k))p-1}(y),a_{(m(0)+\cdots+m(k))p}(y),\fbox{$a_{n(k+1)}$},a_{(m(0)+\cdots+m(k))p+1}(y),\ldots\end{split}\]
Let $x(y)$ denote the point in $(0,1)\setminus\mathbb Q$ whose regular continued fraction expansion is given by this new sequence. Let $L_{p}(\alpha)$ denote the collection of these points:
\[L_{p}(\alpha)=\{x(y)\in (0,1)\setminus\mathbb Q\colon y\in \Lambda(\Phi_p)\}.\]

\begin{lemma}\label{contain-lem}
We have $L_{p}(\alpha)\subset L(\alpha).$\end{lemma}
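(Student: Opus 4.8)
The plan is to verify that every point $x=x(y)\in L_p(\alpha)$ satisfies the defining limit of $L(\alpha)$, namely $\lim_{n\to\infty}\frac{\max\{a_1(x),\ldots,a_n(x)\}\log\log n}{n}=\alpha$, where $\{a_n(x)\}$ is the digit sequence built in Section~\ref{itineraries}. The first step is a bookkeeping observation that pins down where the inserted digits land. Counting the digits of $y$ and the previously inserted digits lying to the left of the box $\fbox{$a_{n(k)}$}$, one finds that this entry occupies position $(m(0)+\cdots+m(k-1))p+k$ in the new sequence; by the recursion \eqref{n-ineq} (unwound from $n(0)=0$) this number is exactly $n(k)$. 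Thus the inserted digit $\lfloor\alpha\tau(n(k))\rfloor$ sits precisely at position $n(k)$, while every other position carries a digit of $y$.

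Next I would separate the two kinds of digits. Because $W_{p}$ is a finite set, all digits of $y$ are bounded by a constant $M_p$ depending only on $p$, whereas the inserted digits $\lfloor\alpha\tau(n(k))\rfloor$ tend to infinity since $\tau(n(k))\to\infty$. As $\tau(x)=x/\log\log x$ is eventually increasing and the $n(k)$ increase to infinity, the inserted digits are eventually increasing in $k$. Consequently, writing $K=K(n)$ for the largest index with $n(K)\le n$, for all large $n$ the maximum of the first $n$ digits is attained by the last inserted digit, i.e. $\max\{a_1(x),\ldots,a_n(x)\}=\lfloor\alpha\tau(n(K))\rfloor$.

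The final step is the asymptotic estimate. Using $n(K)\le n<n(K+1)$ together with the monotonicity of $\log\log$, I would squeeze
\[
\frac{(\alpha\tau(n(K))-1)\log\log n(K)}{n(K+1)}\le\frac{\lfloor\alpha\tau(n(K))\rfloor\log\log n}{n}\le\frac{\alpha\tau(n(K))\log\log n(K+1)}{n(K)}.
\]
Substituting $\tau(n(K))=n(K)/\log\log n(K)$ turns the outer bounds into $\alpha\,n(K)/n(K+1)-o(1)$ and $\alpha\,\log\log n(K+1)/\log\log n(K)$, respectively. By \eqref{nk-diff} we have $n(k)=(k+p)^d+O(p)$ with $p$ fixed, so $n(K+1)/n(K)\to1$ and hence $\log\log n(K+1)/\log\log n(K)\to1$ as $K\to\infty$; this is essentially the content of \eqref{cal-eq}. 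Both bounds therefore converge to $\alpha$, which yields the limit and hence $x(y)\in L(\alpha)$.

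I expect the main obstacle to be the exact identification of $\max\{a_1(x),\ldots,a_n(x)\}$ with the single inserted digit $\lfloor\alpha\tau(n(K))\rfloor$: one must simultaneously ensure that the bounded digits of $y$ never exceed the inserted ones (via $M_p<\infty$) and that, among the inserted digits, the last one dominates (via the eventual monotonicity of $\tau$). Once this dominance is established and the position of each inserted digit is fixed at $n(k)$, the remaining limit is a routine two-sided squeeze resting on \eqref{nk-diff} and \eqref{cal-eq}.
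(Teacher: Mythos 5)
Your proof is correct and takes essentially the same route as the paper: pin down that the $k$-th inserted digit occupies position $n(k)$, observe that the digits coming from $y$ are bounded (finiteness of $W_p$) while the inserted digits $\lfloor\alpha\tau(n(k))\rfloor$ grow and are eventually increasing, so for large $n$ the maximum of the first $n$ digits is the last inserted digit, and then squeeze using $n(K)\le n<n(K+1)$. The only cosmetic difference is that you re-derive the needed ratio limits directly from \eqref{nk-diff}, where the paper invokes its prepared identity \eqref{cal-eq} and the monotonicity of $\tau$.
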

\begin{proof}
Recall that $\Phi=\{\phi_i\}_{i\in I}$ is the $d$-decaying parabolic IFS on $[0,1]$.
Let $M$ denote the maximum of the finite subset of $I$ that consists of integers appearing in some element of $W_p$. Fix a sufficiently large integer $k_0\geq1$ such that $\alpha \tau(n(k_0))\geq M$.
Let $x\in L_{p}(\alpha)$.
For any $n\geq n(k_0)$ there exists $k\geq k_0$ such that \[n(k)\leq n<n(k+1).\]
Then 
\[\max\{a_1(x),\ldots,a_n(x)\}=\max\{M,a_{n(k)}(x)\}=\lfloor\alpha \tau(n(k) )\rfloor.\]
Since $\tau$ is monotone increasing, we have
\[\frac{\alpha \tau(n(k) ) -1}{\tau(n(k+1))}\leq\frac{\lfloor\alpha \tau(n(k))\rfloor}{\tau(n(k+1))}\leq\frac{\max\{a_1(x),\ldots,a_n(x)\}         }{\tau(n)}\leq \frac{\lfloor\alpha \tau(n(k) )\rfloor}{\tau(n(k)) }\leq\alpha.\]
As $n\to\infty$ we have $k\to\infty$, and from \eqref{cal-eq} obtain
\[\lim_{n\to\infty}
\frac{\max\{a_1(x),\ldots,a_n(x)\}}{\tau(n)}=\alpha,\]
namely $x\in L(\alpha)$ as required.
\end{proof}

\subsection{Completing the proof of Theorem~\ref{mainthm}}

The map $y\in \Lambda(\Phi_p)\mapsto x(y)\in L_{p}(\alpha)$ is bijective.
Let $h_\varepsilon\colon  L_{p}(\alpha)\to \Lambda(\Phi_p)$ denote the inverse of this map
that eliminates all the inserted digits $a_{n(k)}$, $k\in\mathbb N$.
\begin{prop}\label{prop-lip}
The map $h_\varepsilon$ is H\"older continuous with exponent $1/(1+\varepsilon)$.
\end{prop}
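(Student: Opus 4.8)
The plan is to compare the scales at which $x=x(y)$ and its image $y=h_\varepsilon(x)$ are resolved, using that $y\in\Lambda(\Phi_p)\subset\Lambda(\Phi)$ and $x\in\Lambda(\Phi)$ share \emph{the same} block coding: the $\Phi$-address of $y$ is a concatenation $\omega^{(1)}\omega^{(2)}\cdots$ of words of $W_p$, and the $\Phi$-address of $x$ is obtained from it by splicing in the digits $a_{n(k)}$ at the block boundaries $n(k)$. Given distinct $x,x'\in L_p(\alpha)$, set $y=h_\varepsilon(x)$, $y'=h_\varepsilon(x')$, and let $r\ge0$ be the length of the longest common prefix of the $W_p$-codings of $y$ and $y'$; thus $y$ and $y'$ agree on the first $r$ blocks and use distinct words $\omega^{(r+1)}\neq\eta^{(r+1)}$ in block $r+1$. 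Since $|x-x'|\to0$ forces $r\to\infty$, and the H\"older bound is trivial (with a large constant) when $r$ stays bounded, it suffices to establish, as $r\to\infty$,
\[-\log|y-y'|\ge\gamma p\,r\quad\text{and}\quad-\log|x-x'|\le(\chi(\nu)+\delta)p\,r+o(pr).\]
Together these give $\liminf(-\log|y-y'|)/(-\log|x-x'|)\ge\gamma/(\chi(\nu)+\delta)=(\chi(\nu)-\delta)/(\chi(\nu)+\delta)$, which exceeds $1/(1+\varepsilon)$ once $\delta$ is taken small enough in Proposition~\ref{katok} (shrinking $\delta$ only strengthens item (d), so this costs nothing); absorbing the bounded-$r$ range into the constant then yields the claim.

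The first estimate is immediate: $y$ and $y'$ both lie in the order-$rp$ fundamental interval $J(\omega^{(1)}\cdots\omega^{(r)})$, and the chain rule together with Proposition~\ref{katok}(c) gives
\[|y-y'|\le|J(\omega^{(1)}\cdots\omega^{(r)})|\le\prod_{i=1}^r\max_{t\in[0,1]}|\phi'_{\omega^{(i)}}(t)|<e^{-\gamma pr},\]
so that $-\log|y-y'|\ge\gamma pr$.

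For the second estimate I would exploit the disjointness in Proposition~\ref{katok}(a) to sidestep the usual adjacent-cylinder difficulty. Let $u$ denote the common $\Phi$-prefix of $x$ and $x'$, consisting of the first $r$ blocks together with the $k$ inserted digits occurring before block $r+1$; then $x=\phi_u(\tilde x)$ and $x'=\phi_u(\tilde x')$ with $\tilde x\in J(\omega^{(r+1)})$ and $\tilde x'\in J(\eta^{(r+1)})$. As $W_p$ is finite and its order-$p$ fundamental intervals are pairwise disjoint by (a), they are separated by a uniform gap $\rho>0$, whence $|\tilde x-\tilde x'|\ge\rho$ and, by the mean value theorem, $|x-x'|\ge\rho\min_{t}|\phi'_u(t)|$. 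It then remains to bound $\max_t\log|\phi'_u(t)|^{-1}$ from above, and by the chain rule this is at most $\sum_{i=1}^r\max_t\log|\phi'_{\omega^{(i)}}(t)|^{-1}+\sum_{j=1}^k\max_t\log|\phi'_{a_{n(j)}}(t)|^{-1}$. The block terms are handled by the two-sided derivative estimate \eqref{katok-eq4} built into the construction of $W_p$, giving $\sum_{i=1}^r\max_t\log|\phi'_{\omega^{(i)}}(t)|^{-1}\le(\chi(\nu)+\delta)pr$.

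The crux — and the step I expect to be the main obstacle — is to show that the inserted digits contribute only $o(pr)$, so that they do not spoil the exponent. Here the $d$-decaying hypothesis is decisive: it yields $\max_t\log|\phi'_{a_{n(j)}}(t)|^{-1}\le d\log a_{n(j)}-\log c$, and since $a_{n(j)}=\lfloor\alpha\tau(n(j))\rfloor$ with $n(j)\asymp(j+p)^d$ by \eqref{nk-diff}, each such term is $O(\log j)$. The number $k$ of insertions before block $r+1$ satisfies $k^{\,d+1}\asymp pr$ (because $\sum_{i<j}m(i)\asymp j^{d+1}/p$), so $k=o(pr)$ and hence $\sum_{j=1}^k\max_t\log|\phi'_{a_{n(j)}}(t)|^{-1}=O(k\log k)=o(pr)$, using \eqref{nk-diff} and \eqref{limitzero}. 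Combining the block and insertion contributions gives $\max_t\log|\phi'_u(t)|^{-1}\le(\chi(\nu)+\delta)pr+o(pr)$, and absorbing $-\log\rho$ yields the second estimate. The entire delicacy lies in this bookkeeping: one must verify that the sparsity of the insertions forced by the positions $n(k)$ dominates the merely polynomial contraction $|\phi'_i|^{-1}\lesssim i^d$ that $d$-decaying permits for a large-index branch.
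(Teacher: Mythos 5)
Your proposal is correct in substance, but it takes a genuinely different route from the paper's. The paper proves Proposition~\ref{prop-lip} via two lemmas: Lemma~\ref{compare}, which shows $|J(a_1,\ldots,a_n)|\geq|\overline{J}(a_1,\ldots,a_n)|^{1+\varepsilon}$ at block-boundary positions $n\in B_k$, and Lemma~\ref{separate}, which localizes a pair $x,y$ inside a fundamental interval at such a position with $|J|\leq CK^{-1}|x-y|$. Both lemmas lean on the bounded distortion condition (A2) --- this is exactly why Proposition~\ref{katok}(b) (blocks ending in a non-parabolic index) and the threshold $k_1$ (making the inserted digits non-parabolic) appear --- and the absorption mechanism is multiplicative: the distortion loss $C^{k+1}$ and the insertion loss are each at most $e^{\varepsilon\gamma(n-\delta n)/4}$ and are swallowed by $|\overline{J}|^{\varepsilon}\leq e^{-\varepsilon\gamma(n-k)/2}$, so the argument works for an arbitrary $\gamma>0$ and needs only the one-sided bound of Proposition~\ref{katok}(c). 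You instead avoid (A2) entirely: you bound $|y-y'|$ above by a product of block maxima (item (c)) and $|x-x'|$ below by the gap $\rho=K$ times a letter-by-letter product of minima, and you win the exponent comparison because the two-sided estimate \eqref{katok-eq4} makes minima and maxima over blocks agree up to $e^{2\delta p}$ per block, with $\delta$ small relative to $\varepsilon\chi(\nu)$. What your route buys is a distortion-free proof that never invokes (A2), item (b), or the parabolic bookkeeping; what it costs is that you must use facts internal to the proof of Proposition~\ref{katok} rather than its statement --- the lower bound $\min_x|\phi'_\omega(x)|\geq e^{-(\chi(\nu)+\delta)p}$ for $\omega\in W_p$, the relation $\gamma=\chi(\nu)-\delta$, and the freedom to shrink $\delta$ --- so to make your argument self-contained one would restate Proposition~\ref{katok} with a two-sided version of (c) and with the ratio $\gamma/(\chi(\nu)+\delta)>1/(1+\varepsilon)$. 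You flag this honestly, and it is indeed available from the construction.

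Two smaller corrections. First, your insertion count is off: since $n(k)=(k+p)^d+O(p)$ by \eqref{nk-diff} and the number of block symbols preceding the $k$-th insertion is $n(k)-k$, one gets $\sum_{i<j}m(i)\asymp j^{d}/p$ (not $j^{d+1}/p$), hence $k\asymp(pr)^{1/d}$ rather than $k^{d+1}\asymp pr$. Fortunately the conclusion you need, namely $\sum_{j\leq k}\max_t\log|\phi'_{a_{n(j)}}(t)|^{-1}=O(k\log k)=o(pr)$, survives because $1/d<1$. Second, in the bounded-$r$ regime you should note explicitly that there are only finitely many admissible prefixes $u$ with at most $r_0$ blocks (as $W_p$ is finite and the inserted digits are deterministic functions of position), so $|x-x'|$ is uniformly bounded below there and this case is absorbed into the H\"older constant, exactly as in the paper's treatment of pairs with $s(x,y)\leq n(k_1)-1$.
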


We finish the proof of Theorem~\ref{mainthm} subject to Proposition~\ref{prop-lip}. The next lemma is standard in dimension theory.

\begin{lemma}[{\cite[Proposition~3.3]{Fal14}}]
\label{Holder-lem}
Let $\Lambda\subset [0,1]$ and let $h\colon \Lambda\to [0,1]$ be H\"older continuous with exponent $\beta\in(0,1]$.
Then \[\dim_{\rm H}\Lambda\geq\beta\cdot\dim_{\rm H}h(\Lambda).\]
\end{lemma}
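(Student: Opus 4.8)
The plan is to establish the equivalent inequality $\dim_{\rm H}h(\Lambda)\le\beta^{-1}\dim_{\rm H}\Lambda$, which rearranges to the asserted bound. First I would record what H\"older continuity supplies quantitatively: there is a constant $C>0$ with $|h(x)-h(y)|\le C|x-y|^\beta$ for all $x,y\in\Lambda$. The guiding principle is that $h$ distorts scales like $\delta\mapsto C\delta^\beta$, so it can inflate Hausdorff dimension by at most the factor $\beta^{-1}$, and the entire argument amounts to a careful bookkeeping of this fact at the level of the Hausdorff (pre-)measures.

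The key step is a cover-transport estimate. Given $\delta>0$ and any countable cover $\{U_i\}_i$ of $\Lambda$ with $|U_i|\le\delta$, the H\"older bound forces $|h(U_i\cap\Lambda)|\le C|U_i|^\beta\le C\delta^\beta$, and the family $\{h(U_i\cap\Lambda)\}_i$ covers $h(\Lambda)$. Hence for any $s\ge0$, writing $t=\beta s$,
\[\sum_i|h(U_i\cap\Lambda)|^{s}\le C^{s}\sum_i|U_i|^{\beta s}=C^{s}\sum_i|U_i|^{t}.\]
Since the family $\{h(U_i\cap\Lambda)\}_i$ is a $C\delta^\beta$-cover of $h(\Lambda)$, the left-hand side dominates the restricted Hausdorff pre-measure $\mathcal H^{s}_{C\delta^\beta}(h(\Lambda))$; taking the infimum over all $\delta$-covers of $\Lambda$ on the right then yields $\mathcal H^{s}_{C\delta^\beta}(h(\Lambda))\le C^{s}\,\mathcal H^{t}_{\delta}(\Lambda)$. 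Letting $\delta\to0$, so that $C\delta^\beta\to0$ as well, gives the clean inequality
\[\mathcal H^{s}(h(\Lambda))\le C^{s}\,\mathcal H^{\beta s}(\Lambda).\]

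Finally I would run the standard dimension argument off this inequality. Fix any $s$ with $\beta s>\dim_{\rm H}\Lambda$; then $\mathcal H^{\beta s}(\Lambda)=0$ by definition of Hausdorff dimension, whence $\mathcal H^{s}(h(\Lambda))=0$, and therefore $\dim_{\rm H}h(\Lambda)\le s$. As this holds for every $s>\beta^{-1}\dim_{\rm H}\Lambda$, letting $s\downarrow\beta^{-1}\dim_{\rm H}\Lambda$ gives $\dim_{\rm H}h(\Lambda)\le\beta^{-1}\dim_{\rm H}\Lambda$, equivalently $\dim_{\rm H}\Lambda\ge\beta\cdot\dim_{\rm H}h(\Lambda)$. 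There is no genuine obstacle here, as this is a textbook estimate; the only points demanding care are the exponent bookkeeping $t=\beta s$ and the observation that the two scale parameters $\delta$ and $C\delta^\beta$ tend to $0$ together, so that passage to the full Hausdorff measures is legitimate. Empty intersections $U_i\cap\Lambda=\emptyset$ contribute nothing and may simply be discarded.
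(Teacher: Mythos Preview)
Your proof is correct and is precisely the standard textbook argument. The paper does not supply its own proof of this lemma; it merely cites \cite[Proposition~3.3]{Fal14} and states the result without argument, so there is nothing substantive to compare against. Your cover-transport estimate $\mathcal H^{s}(h(\Lambda))\le C^{s}\mathcal H^{\beta s}(\Lambda)$ followed by the threshold argument is exactly the proof one finds in Falconer's book.
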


Proposition~\ref{prop-lip}  allows us to apply Lemma~\ref{Holder-lem} to the map $h_\varepsilon$.
Further, 
Lemma~\ref{contain-lem}, Lemma~\ref{Holder-lem} and Proposition~\ref{katok}(d) altogether yield
\[\dim_{\rm H}L(\alpha)\geq\dim_{\rm H}L_{p}(\alpha)\geq\frac{1}{1+\varepsilon}\dim_{\rm H}\Lambda(\Phi_{p})\geq\frac{1}{1+\varepsilon}\left(\dim_{\rm H}\Lambda(\Phi )-\varepsilon\right).\]
Since $\varepsilon>0$ is arbitrary, we obtain
$\dim_{\rm H}L(\alpha)\geq\dim_{\rm H}\Lambda(\Phi)$ as required in Theorem~\ref{mainthm}.

\subsection{The first technical lemma}
For a  proof of Proposition~\ref{prop-lip} we need two technical lemmas. The first one compares diameters of two fundamental intervals of different orders associated with $L_p(\alpha)$, one obtained from the other eliminating all the inserted digits. To give a precise statement we need some definitions. 
For each $k\in\mathbb N$,
let $A_{k}$ denote the set of $a_1\cdots a_{n(k+1)}\in I^{n(k+1)}$ for which the following two conditions hold for every  $0\leq j\leq k$:
\begin{itemize}
\item $a_{n(j)+1}\cdots a_{n(j+1)-1}\in I^{n(j+1)-n(j)-1}$ is a concatenation of elements of $W_p$;
\item $a_{n(j+1)}=\lfloor\alpha \tau(n(j+1) )\rfloor$.

\end{itemize}
 We set
\[B_k=\{
n(k)+mp\colon m=0,1,\ldots,m(k)\}.\]
Notice that
\[L_{p}(\alpha)=\bigcap_{k=0}^\infty\bigcup_{a_1\cdots a_{n(k+1)}\in A_{k}}J(a_1,\ldots,a_{n(k+1)}).\]

For each $n\in\mathbb N$ and $a_1\cdots a_{n}\in I^n$, let $\overline{a_1\cdots a_n}$ denote the word from $I$ 
obtained by eliminating from $a_1\cdots a_n$ the digits 
$a_{n(1)}$, $a_{n(2)},\ldots, a_{n(k)}$, 
$n(k)\leq n<n(k+1)$. 
Set
\[\overline{J}(a_1,\ldots,a_n)=\phi_{\overline{a_1\cdots a_n}}([0,1]).\]
Let $|\cdot|$ denote the Euclidean diameter of a set in $[0,1]$.
\begin{lemma}\label{compare}
There exists $k_1\in\mathbb N$ such that for any integer $k\geq k_1$,
 any $a_1\cdots a_{n(k+1)}\in A_{k}$ and any
$n\in B_k$
we have
\[|J(a_1,\ldots,a_n)|\geq|\overline{J}(a_1,\ldots,a_n)|^{1+\varepsilon}.\]
\end{lemma}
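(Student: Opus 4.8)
The plan is to pass to logarithms: since $|\overline{J}(a_1,\ldots,a_n)|\le 1$, the asserted inequality is equivalent to $-\log|J(a_1,\ldots,a_n)|\le(1+\varepsilon)\bigl(-\log|\overline{J}(a_1,\ldots,a_n)|\bigr)$. Write $A=-\log|J(a_1,\ldots,a_n)|$ and $B=-\log|\overline{J}(a_1,\ldots,a_n)|$. The denominator admits a clean lower bound: for $n\in B_k$ the reduced word $\overline{a_1\cdots a_n}$ is a concatenation of $N=(n-k)/p$ elements of $W_p$, so the chain rule together with Proposition~\ref{katok}(c) gives $|\overline{J}(a_1,\ldots,a_n)|<e^{-\gamma pN}=e^{-\gamma(n-k)}$, i.e.\ $B>\gamma(n-k)$. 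Since \eqref{limitzero} forces $k=o(n)$, for all large $k$ one has $n-k\ge n(k)/2\ge k^d/2$, so $B$ grows at least like $\gamma k^d$. It therefore suffices to prove $A-B\le\varepsilon\gamma(n-k)$, which is strictly smaller than $\varepsilon B$.

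The heart of the argument is a block-level telescoping of the two diameters that isolates the inserted digits. Group $a_1\cdots a_n$ into the maximal $W_p$-block segments $S_0,S_1,\ldots,S_k$ lying between the consecutive inserted digits $a_{n(1)},\ldots,a_{n(k)}$, so that
\[
\phi_{a_1\cdots a_n}=\phi_{S_0}\circ\phi_{a_{n(1)}}\circ\phi_{S_1}\circ\cdots\circ\phi_{a_{n(k)}}\circ\phi_{S_k},\qquad \phi_{\overline{a_1\cdots a_n}}=\phi_{S_0}\circ\phi_{S_1}\circ\cdots\circ\phi_{S_k}.
\]
Constructing both intervals from the innermost map outward and applying the mean value theorem to each factor produces evaluation points in $[0,1]$ and, upon dividing the two telescoping products, the identity
\[
\frac{|J(a_1,\ldots,a_n)|}{|\overline{J}(a_1,\ldots,a_n)|}=\left(\prod_{j=0}^{k}\frac{|\phi'_{S_j}(f_j)|}{|\phi'_{S_j}(r_j)|}\right)\prod_{j=1}^{k}|\phi'_{a_{n(j)}}(g_j)|.
\]
Taking $-\log$ gives $A-B\le\sum_{j=0}^{k}D_{|S_j|}(\Phi)+\sum_{j=1}^{k}\bigl(-\log|\phi'_{a_{n(j)}}(g_j)|\bigr)$, where the first sum accounts for the mismatch of evaluation points on the blocks and the second is the genuine extra contraction caused by the inserted digits.

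Each sum is then to be bounded by $\tfrac{\varepsilon\gamma}{2}(n-k)$. For the block sum I use (B2): because $\gamma$ may be taken independent of $p$ (in the proof of Proposition~\ref{katok} it equals $\chi(\nu)-\delta$ for data selected before $p$), I first fix $L_0$ with $D_\ell(\Phi)\le\tfrac{\varepsilon\gamma}{2}\ell$ for all $\ell\ge L_0$ and then require $p\ge L_0$; since every non-empty block satisfies $|S_j|\ge p\ge L_0$, it follows that $\sum_j D_{|S_j|}(\Phi)\le\tfrac{\varepsilon\gamma}{2}\sum_j|S_j|=\tfrac{\varepsilon\gamma}{2}(n-k)$. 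For the insertion sum I use that $\Phi$ is $d$-decaying, so $-\log|\phi'_{a_{n(j)}}(g_j)|\le d\log a_{n(j)}-\log c$; with $a_{n(j)}=\lfloor\alpha\tau(n(j))\rfloor$ and $n(j)\le(j+p)^d+p$ from \eqref{nk-diff}, this yields $\sum_{j=1}^k(d\log a_{n(j)}-\log c)=O(k\log k)$, which is $o(k^d)$ since $d>1$ and hence at most $\tfrac{\varepsilon\gamma}{2}(n-k)$ once $k\ge k_1$. The threshold $k_1$ is uniform in the choice of word in $A_k$ and in $n\in B_k$: the block estimate holds for each $n$ directly, while the insertion estimate needs only the lower bound $n-k\ge k^d/2$ valid throughout $B_k$. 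Combining gives $A-B\le\varepsilon\gamma(n-k)<\varepsilon B$, as required.

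The main obstacle is distortion control. A naive digit-by-digit comparison of $J$ and $\overline{J}$ would lose a factor $D_1(\Phi)$ at every one of the $\approx n$ non-inserted positions, a total of order $n\,D_1(\Phi)$ that swamps $B\asymp\gamma n$. The telescoping is engineered precisely to confine distortion to whole $W_p$-blocks, where the sublinear estimate $D_{|S_j|}(\Phi)=o(|S_j|)$ is available and, crucially, can be made uniformly small across all blocks by enlarging $p$. The delicate point is the order of quantifiers: the contraction rate $\gamma$ must be fixed before $p$, so that the distortion threshold $L_0$ can be chosen first and the requirement $p\ge L_0$ imposed afterwards without circularity.
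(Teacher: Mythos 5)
Your proof is correct, and its skeleton coincides with the paper's: the same telescoping of $\phi_{a_1\cdots a_n}$ against $\phi_{\overline{a_1\cdots a_n}}$ into $W_p$-blocks and inserted digits, the same use of the $d$-decaying hypothesis to show the inserted digits cost only $o(n-k)$ (your $O(k\log k)$ versus the paper's $(c/n^d)^{2n^{1/d}}$ bound), and the same lower bound $-\log|\overline{J}(a_1,\ldots,a_n)|\geq\gamma(n-k)$ from Proposition~\ref{katok}(c). The genuine divergence is in how the block mismatch is controlled. The paper applies (A2): since each block ends with a non-parabolic index --- this is precisely what Proposition~\ref{katok}(b) is for, and why the paper additionally argues that the inserted digit $a_{n(k)}$ is eventually non-parabolic --- each of the $k+1$ block comparisons costs at most $\log C$, and the total $(k+1)\log C$ is absorbed using \eqref{limitzero} ($k+1\leq\delta n$) together with the choice $\delta\log C\leq\varepsilon\gamma(1-\delta)/4$. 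You instead invoke (B2) blockwise, $D_{|S_j|}(\Phi)\leq\tfrac{\varepsilon\gamma}{2}|S_j|$. This buys you independence from Proposition~\ref{katok}(b) and from all parabolicity considerations inside this lemma, since $D_n(\Phi)$ is a supremum over \emph{all} words; but it costs the extra requirement $p\geq L_0(\varepsilon\gamma)$, which is circular under the statement of Proposition~\ref{katok} (where $\gamma$ is produced after $p$) and is legitimate only because, in its proof, $\gamma=\chi(\nu)-\delta$ is uniform over $p\geq p_0$ --- a point you correctly identify and justify, though it forces a mild restructuring of the setup in Section~3.1 so that $\nu$, $\delta$, hence $\gamma$, are fixed before $p$. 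The paper's route leaves the global quantifier order untouched ($\delta$ and $k_1$ are chosen after $p$ and $\gamma$, so Proposition~\ref{katok} is used as a black box). If you wanted to keep your (B2) argument without reordering quantifiers, you could observe from \eqref{nk-diff} that the block lengths $m(j)p$ grow like $d(j+p)^{d-1}$, so all but a bounded number of blocks exceed any fixed threshold $L_0$, and the finitely many short blocks contribute a constant, which is at most $\tfrac{\varepsilon\gamma}{4}(n-k)$ once $k\geq k_1$.
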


\begin{proof}

Take $\delta>0$ such that
\begin{equation}\label{delta-eq}\delta\log C\leq \frac{\varepsilon\gamma(1-\delta)}{4},\end{equation}
where $C\geq1$ is the constant in (A2).
Let $k\in\mathbb N$, $a_1\cdots a_{n(k+1)}\in A_k$
and let $n\in B_k$. We have
\[n(k)\leq n<n(k+1).\]
In view of \eqref{limitzero}, we assume $k$ is sufficiently large so that
\begin{equation}\label{tn}k+1\leq\delta n(k).\end{equation}

On the one hand, by the mean value theorem there exists $x_0\in[0,1]$ such that
\[|J(a_1,\ldots,a_n)|=|\phi'_{a_1\cdots a_n }(x_0)|.\]
For $x\in [0,1]$ we set 
\[R(x)=\begin{cases}|(\phi_{a_{n(k)+1}}\circ\!\!\!\!&\cdots\circ\phi_{a_n})'(x_0)|\ \text{ if }n(k)<n,\\
&1\quad\quad\quad\quad\quad\quad \text{ if }n(k)=n.\end{cases}\]
 The chain rule gives
\begin{equation}\label{compare-eq3}\begin{split}|J(a_1,\ldots,a_n)|=&\prod_{j=1}^{k}|(\phi_{a_{n(j-1)+1}}\circ\cdots\circ
\phi_{a_{n(j)-1} } )'(\phi_{a_{n(j)}\cdots a_n}(x_0))|\\
&\times\prod_{j=1}^{k}
|\phi'_{a_{n(j)} }(\phi_{a_{n(j)+1}}\circ\cdots\circ\phi_{a_n}(x_0))|\times R(x_0).\end{split}\end{equation}
By the mean value theorem and the chain rule, there exists $y_0\in[0,1]$ such that
\begin{equation}\label{compare-eq4}\begin{split}|\overline{J}(a_1,\ldots,a_n)|=&\prod_{j=1}^{k}|(\phi_{a_{n(j-1)+1} }\circ\cdots\circ\phi_{a_{n(j)-1} } )'(
\phi_{\overline{a_{n(j)}\cdots a_n} }(y_0))|\times R(y_0).\end{split}\end{equation}
From Proposition~\ref{katok}(b), $a_{n(j)-1}$ is not a parabolic index for every $1\leq j\leq k$. By (A2) we have
\[\frac{|(\phi_{a_{n(j-1)+1}}\circ\cdots\circ
\phi_{a_{n(j)-1} })'(\phi_{a_{n(j)}\cdots a_n}(x_0))|}{|(\phi_{a_{n(j-1)+1} }\circ\cdots\circ\phi_{a_{n(j)-1} })'(\phi_{\overline{a_{n(j)}\cdots a_n}}(y_0))|}\geq C^{-1}.\]
Since $k\geq k_1$ and the number of parabolic indices is finite by (A1), if $k_1$ is sufficiently large then $a_{n(k)}$ is not a parabolic index.
Since $n\in B_k$, $a_n$ is not a parabolic index and (A2) gives
\[\frac{R(x_0)}{R(y_0)}\geq C^{-1}.\]
Combining \eqref{compare-eq3}, \eqref{compare-eq4} and then applying the above two distortion estimates,
 we get
\begin{equation}\begin{split}\label{compare-eq1}\frac{|J(a_1,\ldots,a_n)|}{|\overline{J}(a_1,\ldots,a_n)|}&\geq  C^{-k-1}\prod_{j=1}^{k}
|\phi'_{a_{n(j)} }(\phi_{a_{n(j)+1}}\circ\cdots\circ\phi_{a_n}(x_0))|\\
&\geq C^{-k-1}\prod_{j=1}^{k} \min_{x\in[0,1]}|\phi'_{n(j)}(x)|.\end{split}\end{equation}
We estimate the two factors in the last expression separately. By $k+1\leq\delta n(k)\leq \delta n$ from \eqref{tn}, and then by \eqref{delta-eq} we have
\[C^{-k-1}\geq C^{-\delta n}\geq e^{ -\varepsilon\gamma
(n-\delta n)/4}.\]
Since
 $n\geq n(k)$ and $\Phi$ is $d$-decaying, 
 \eqref{nk-diff} implies
$k\leq 2n^{\frac{1}{d}}$, and
\[\prod_{j=1}^{k} 
\min_{x\in[0,1]}|\phi'_{n(j)}(x)|\geq \left(\frac{c}{n^{d}}\right)^{2n^{\frac{1}{d}} }\geq e^{ -\varepsilon\gamma
(n-\delta n)/4},\]
provided $k$ is sufficiently large. Plugging these two estimates into \eqref{compare-eq1} yields \begin{equation}\label{eq-s} \frac{|J(a_1,\ldots,a_n)|}{|\overline{J}(a_1,\ldots,a_n)|}\geq  e^{ -\varepsilon\gamma(n-\delta n)/2}\geq e^{ -\varepsilon\gamma(n-k)/2}.\end{equation}

On the other hand, since $\overline{a_1\cdots a_n}$ contains a concatenation of
$\lfloor(n-k)/p\rfloor$ elements of $W_p$ counted with multiplicity,  
Proposition~\ref{katok}(c) yields
\begin{equation}\label{compare-eq2}|\overline{J}(a_1,\ldots,a_{n})|^{\varepsilon}\leq
e^{-\varepsilon\gamma(n-k)/2},\end{equation}
provided $k$ is sufficiently large.
From
\eqref{eq-s} and \eqref{compare-eq2} we obtain the desired inequality.
\end{proof}

\subsection{The second technical lemma}
Before proceeding further, we introduce a positive constant
 \begin{equation}\label{K-def}K=\min_{\substack{\omega,\eta\in W_p\\ \omega\neq \eta}}\min\{|x-y|\colon x\in J(\omega),\ y\in J(\eta)\}.\end{equation}
Fundamental intervals are closed subintervals of $[0,1]$, and all fundamental intervals of order $p$ corresponding to words in $W_p$ are pairwise disjoint by
Proposition~\ref{katok}(a). Hence $K>0$ holds.

The second technical lemma for the proof of Proposition~\ref{prop-lip} gives a lower bound on the distance between two nearby points in $L_p(\alpha)$ in terms of the diameter of some fundamental interval. 
For a pair $(x,y)$ of distinct points in $L_{p}(\alpha)$, let $s(x,y)$ denote the maximal integer $n\geq0$
for which there exists a fundamental interval of order $n$ that contains $x$ and $y$. By 
Lemma~\ref{unif-decay}, $s(x,y)$ is well-defined.

\begin{lemma}\label{separate}
For any pair $(x,y)$ of distinct points in $L_{p}(\alpha)$ satisfying $s(x,y)\geq p$, there exist integers $k\geq0$, $m\geq0$ such that \[n(k)+mp\leq s(x,y),\ 
n(k)+mp\in B_k
\ \text{ and }\]
\[|J(a_1(x),\ldots,a_{n(k)+mp}(x))| \leq CK^{-1}|x-y|.\]\end{lemma}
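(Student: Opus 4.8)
The plan is to pick the largest ``clean'' boundary $N$ not exceeding the separation time $s=s(x,y)$, pull $x$ and $y$ back to $[0,1]$ by the inverse of $\phi_{a_1\cdots a_N}$, and exploit the uniform gap $K$ from \eqref{K-def} between the order-$p$ fundamental intervals of distinct words of $W_p$. Concretely, I would let $N$ be the largest element of $\bigcup_{j\ge0}B_j$ with $N\le s$; since $p\in B_0$ and $s\ge p$, such an $N$ exists and $N\ge p$, and I write $N=n(k)+mp\in B_k$.

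The first key remark is that any two points of $L_p(\alpha)$ share the \emph{same} inserted digits $a_{n(j)}=\lfloor\alpha\tau(n(j))\rfloor$. Recalling that $s$ is the length of the longest common address prefix, so that $a_i(x)=a_i(y)$ for $i\le s$ while $a_{s+1}(x)\ne a_{s+1}(y)$, the first disagreement occurs at a non-inserted position, so $s+1\notin\{n(j):j\ge1\}$. I would then pin down the block structure around $N$. As $N$ is the largest clean boundary below $s$, the next clean boundary is $>s$; were $m=m(k)$ (so $N=n(k+1)-1$), this next boundary would be the inserted position $n(k+1)$, forcing $s=N$ and $s+1=n(k+1)$, contradicting the previous sentence. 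Thus $0\le m\le m(k)-1$, the next clean boundary is $N+p$, and the first disagreement occurs at $s+1\in\{N+1,\dots,N+p\}$, i.e. inside the single $W_p$-word filling positions $N+1,\dots,N+p$. In particular $\omega^{(x)}=a_{N+1}(x)\cdots a_{N+p}(x)$ and $\omega^{(y)}=a_{N+1}(y)\cdots a_{N+p}(y)$ are distinct words of $W_p$.

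To finish, both $x,y$ lie in $J_N:=J(a_1(x),\dots,a_N(x))$, so with $\psi=\phi_{a_1\cdots a_N}$ the preimages $x'=\psi^{-1}(x)$ and $y'=\psi^{-1}(y)$ land in the distinct order-$p$ intervals $J(\omega^{(x)})$ and $J(\omega^{(y)})$; by \eqref{K-def} this gives $|x'-y'|\ge K$. The mean value theorem supplies $\xi,\zeta\in[0,1]$ with $|x-y|=|\psi'(\xi)|\,|x'-y'|\ge K|\psi'(\xi)|$ and $|J_N|=|\psi'(\zeta)|$, so it remains only to bound $|\psi'(\zeta)|\le C|\psi'(\xi)|$, after which $|J_N|\le C|\psi'(\xi)|\le CK^{-1}|x-y|$. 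This distortion bound is exactly (A2) applied to the word $a_1\cdots a_N$, and the step needing care — the main, if modest, obstacle — is verifying its hypothesis at the terminal letter $a_N$: when $m\ge1$, $a_N$ is the last letter of a $W_p$-word, hence non-parabolic by Proposition~\ref{katok}(b); when $m=0$ and $N=n(k)$ is an inserted digit, $a_N$ is preceded by the non-parabolic letter $a_{n(k)-1}$, so $a_{n(k)-1}\ne a_{n(k)}$ whenever $a_{n(k)}$ happens to be parabolic, and (A2) again applies. Everything else is bookkeeping on the positions $n(k)$, $m(k)$ and the sets $B_k$ rather than genuine estimation.
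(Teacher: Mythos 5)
Your proof is correct and follows essentially the same route as the paper's: locate a block boundary $n(k)+mp\le s(x,y)$ with $n(k)+mp\in B_k$ so that the first digit disagreement falls inside the next complete $W_p$-word, pull back by $\phi_{a_1\cdots a_{n(k)+mp}}^{-1}$ to exploit the gap $K$ between the order-$p$ intervals of distinct words of $W_p$, and conclude via (A2) and the mean value theorem. The only differences are presentational: you choose the boundary as the maximal element of $\bigcup_{j\ge0}B_j$ not exceeding $s(x,y)$ (the paper instead fixes $k$ with $n(k)\le s(x,y)\le n(k+1)-1$ and then picks $m$), and you verify the hypothesis of (A2) at the terminal letter explicitly, a point the paper leaves implicit.
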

\begin{proof}
 There exists $k\geq 0$
such that 
$n(k)\leq s(x,y)\leq n(k+1)-1.$
By the definition of $s(x,y)$ and $a_{n(k+1)}(x)=a_{n(k+1)}(y)$,
the second inequality is actually strict:
\[n(k)\leq s(x,y)<n(k+1)-1.\]
From the definition \eqref{n-ineq}, there exists $m\geq0$ such that
\begin{equation}\label{ineq}n(k)+mp\leq s(x,y)< n(k)+(m+1)p\leq n(k+1)-1.\end{equation}
Hence $n(k)+mp\in B_k$ holds.
Since $s(x,y)\geq p$ and $n(0)=0$, $k=0$ implies $m\geq1$.

The first and second inequalities in \eqref{ineq}
 together imply that \[a_i(x)=a_i(y)\ \text{ for }i=1,\ldots,n(k)+mp\ \text{ and }\]
 \[a_i(x)\neq a_i(y)\ \text{ for some }i\in\{n(k)+mp+1,\ldots,n(k)+(m+1)p\}.\] 
  Set \[a_i=a_i(x)\ \text{ for }i=1,\ldots, n(k)+mp,\ 
    \phi=\phi_{a_1\cdots a_{n(k)+mp}}\ \text{ and }\]
   \[\omega(z)=a_{n(k)+mp+1}(z)\cdots a_{n(k)+(m+1)p}(z)\in I^p \ \text{ for } z=x,y.\]
We have
$\phi^{-1}(x)\in J(\omega(x))$,
$\phi^{-1}(y)\in J(\omega(y))$ and 
$\omega(x)\neq  \omega(y)$.
By the definition of $L_{p}(\alpha)$, 
both $\omega(x)$ and $\omega(y)$ belong to $W_p$.
By (A2) and the definition of $K$ in \eqref{K-def}, we have
\[\begin{split}&\frac{|J(a_1,\ldots,a_{n(k)+mp})|}{|x-y|}\leq C\frac{|\phi^{-1}(J(a_1,\ldots,a_{n(k)+mp} ))|}{|\phi^{-1}(x)-\phi^{-1}(y)|}\leq CK^{-1},\end{split}\]
as required.
\end{proof}

\subsection{Proof of Proposition~\ref{prop-lip}}We set
\[K_1=\min_{\substack{x,y\in L_p(\alpha)\\ x\neq y }}\min\{|x-y|\colon s(x,y)\leq n(k_1)-1\},\]
where $k_1$ is the positive integer in Lemma~\ref{compare}.
Since $K>0$ we have $K_1>0$.
Let $(x,y)$ be a pair of distinct points in
$L_{p}(\alpha)$.
If $s(x,y)\leq n(k_1)-1$ then 
 $|x-y|\geq K_1,$ and so 
\begin{equation}\label{holder-ineq1}|h_\varepsilon(x)-h_\varepsilon(y)|\leq 1\leq K_1^{-1}|x-y|.\end{equation}
If $s(x,y)\geq n(k_1)\geq p$, then
let $k\geq 0$, $m\geq0$ be the integers for which the conclusion of Lemma~\ref{separate} holds. Since $x,y\in J(a_1(x),\ldots,a_{n(k)+mp}(x))$ we have
$h_\varepsilon(x),h_\varepsilon(y)\in\overline{J}(a_1(x),\ldots,a_{n(k)+mp}(x))$, and thus
\begin{equation}\begin{split}\label{holder-ineq2}|h_\varepsilon(x)-h_\varepsilon(y)|&\leq|
\overline{J}(a_1(x),\ldots,a_{n(k)+mp}(x))|\\
&\leq  |J(a_1(x),\ldots,a_{n(k)+mp}(x))|^{\frac{1}{1+\varepsilon}}\leq \left(CK^{-1} |x-y|\right)^{\frac{1}{1+\varepsilon}}.\end{split}\end{equation} 
To deduce the second inequality we have used
Lemma~\ref{compare}.
By \eqref{holder-ineq1} and \eqref{holder-ineq2},
$h_\varepsilon$ is H\"older continuous with exponent $1/(1+\varepsilon)$ as required.
\qed

\section{Verification of (B1) (B2) and examples}\label{verify-s}
In this last section we give a verifiable sufficient condition for (B1) (B2), and provide examples of $2$-decaying parabolic IFSs satisfying them. 
\begin{prop}\label{sat-prop}
Let $\Phi=\{\phi_i\}_{i\in I}$ be an infinite parabolic IFS such that $\phi_i$ is $C^2$ for each $i\in I$, and
\begin{equation}\label{dist-ineq}\sup_{i\in I}\max_{x\in[0,1]}|(\log| \phi'_{i}(x)|)'|<\infty.\end{equation}
Then (B1) (B2) hold.
\end{prop}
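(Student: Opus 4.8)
Set $M_0:=\sup_{i\in I}\max_{x\in[0,1]}\bigl|(\log|\phi_i'(x)|)'\bigr|$, which is finite by \eqref{dist-ineq}. Since each $\phi_i$ is a $C^2$ diffeomorphism onto its image, $\phi_i'$ never vanishes and $\log|\phi_i'|$ is $C^1$, so $M_0$ is well defined. My plan is to prove (B2) directly from this single bound, and to reduce (B1) to the saturation (variational) principle of \cite{JT} for the associated Bernoulli map, checking that \eqref{dist-ineq} supplies the distortion hypotheses that principle requires.

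For (B2) I would argue by the chain rule. For $\omega=\omega_1\cdots\omega_n\in I^n$ and $x\in[0,1]$ we have $\log|\phi'_{\omega_1\cdots\omega_n}(x)|=\sum_{j=1}^n\log|\phi'_{\omega_j}(\phi_{\omega_{j+1}\cdots\omega_n}(x))|$, with the convention that the inner composition is the identity when $j=n$. Subtracting the same expression at $y$ and applying the mean value theorem to $t\mapsto\log|\phi'_{\omega_j}(t)|$ in each summand gives
\[\left|\log\frac{\phi'_{\omega_1\cdots\omega_n}(x)}{\phi'_{\omega_1\cdots\omega_n}(y)}\right|\le M_0\sum_{j=1}^n\bigl|\phi_{\omega_{j+1}\cdots\omega_n}(x)-\phi_{\omega_{j+1}\cdots\omega_n}(y)\bigr|\le M_0\sum_{\ell=0}^{n-1}\rho_\ell,\]
where $\rho_\ell:=\sup_{\eta\in I^\ell}|J(\eta)|$, because $\phi_{\omega_{j+1}\cdots\omega_n}([0,1])=J(\omega_{j+1},\ldots,\omega_n)$ is a fundamental interval of order $n-j$. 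Taking suprema over $x,y$ and over $\omega$ yields $D_n(\Phi)\le M_0\sum_{\ell=0}^{n-1}\rho_\ell$. In particular $D_1(\Phi)\le M_0<\infty$, so Lemma~\ref{unif-decay} applies and gives uniform decay of fundamental intervals, i.e. $\rho_\ell\to0$. A Ces\`aro argument then shows $\frac1n\sum_{\ell=0}^{n-1}\rho_\ell\to0$, hence $D_n(\Phi)=o(n)$, establishing (B2).

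For (B1) the inequality $\sup\{\dim(\nu\circ\Pi^{-1}):\nu\in\mathcal M(\Phi)\}\le\dim_{\rm H}\Lambda(\Phi)$ is immediate, since each $\nu\circ\Pi^{-1}$ is carried by $\Lambda(\Phi)$ and therefore $\dim(\nu\circ\Pi^{-1})\le\dim_{\rm H}\Lambda(\Phi)$. For the reverse inequality I would pass, as in Section~\ref{Markov-sec}, to the Bernoulli map $f$ associated with $\Phi$, whose neutral fixed points correspond exactly to the parabolic indices and for which $\dim_{\rm H}\Lambda(\Phi)=\dim_{\rm H}\bigcap_{n=0}^\infty f^{-n}(\varDelta)$ (the difference being countable). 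The saturation identity is then the variational principle of \cite{JT}: the Hausdorff dimension of this set equals the supremum of $h(\nu)/\chi(\nu)$ over ergodic expanding measures of finite Lyapunov exponent, which by the dimension formula \eqref{dim-formula} is precisely the right-hand side of (B1). The content of the proposition is that the running hypotheses of \cite{JT} hold here: the uniform bound $M_0$ is exactly a uniform R\'enyi/bounded-distortion condition on the branches of $f$, which together with the decay of fundamental intervals established above furnishes the tame distortion control on which \cite{JT} relies.

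I expect the genuine difficulty to lie entirely in (B1). The parabolic points force the Lyapunov exponent to degenerate, so $\dim_{\rm H}\Lambda(\Phi)$ cannot be matched against a single equilibrium state; the mechanism that overcomes this—approximating $\Lambda(\Phi)$ from within by finite, uniformly expanding subsystems (after inducing away the neutral behaviour) and realizing the dimension of each approximant by an ergodic measure in $\mathcal M(\Phi)$—is carried out in \cite{JT}, and in this argument the sole role of \eqref{dist-ineq} is to guarantee the distortion estimates underlying the approximation and inducing steps. By contrast (B2) is elementary once Lemma~\ref{unif-decay} is in hand, so the verification there amounts only to the bookkeeping above.
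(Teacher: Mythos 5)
Your proposal is correct in overall substance, and the two halves deserve different verdicts. For (B2) you take a genuinely different and more self-contained route than the paper: the paper passes to the associated Bernoulli map $f$, uses the identity $f_i''(x)/(f_i'(x))^2=-\phi_i''(f_i(x))/\phi_i'(f_i(x))$ to show that \eqref{dist-ineq} is exactly R\'enyi's condition for $f$, and then quotes \cite[Lemma~5.1]{JT} together with Lemma~\ref{unif-decay}. You instead prove $D_n(\Phi)\le M_0\sum_{\ell=0}^{n-1}\rho_\ell$ (with your $\rho_\ell=\sup_{\eta\in I^\ell}|J(\eta)|$) directly at the level of the IFS, via the chain rule and the mean value theorem applied to $\log|\phi_{\omega_j}'|$, using that the two points fed into the $j$-th branch lie in a common fundamental interval of order $n-j$; then $D_1(\Phi)\le M_0<\infty$ lets you invoke Lemma~\ref{unif-decay} to get $\rho_\ell\to0$, and a Ces\`aro average gives $D_n(\Phi)=o(n)$. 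This is complete and correct; in effect you reprove the special case of \cite[Lemma~5.1]{JT} that the paper needs, which buys independence from that citation, at the cost of not setting up R\'enyi's condition for $f$ --- which the paper reuses in the (B1) step.

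For (B1) your route is the same as the paper's --- discard the countable set $\Lambda(\Phi)\setminus\bigcap_{n=0}^\infty f^{-n}(\varDelta)$ and appeal to the saturation (variational) result of \cite{JT} for the associated Bernoulli map --- but this is where your write-up is materially thinner. The paper's entire (B1) argument consists of checking the hypotheses (i)--(iv) of \cite[Proposition~5.2]{JT}: R\'enyi's condition via the displayed identity (which you assert, ``is exactly a uniform R\'enyi condition on the branches of $f$,'' but never compute), condition (i) being vacuous because (A1) yields (M3) of \cite{JT}, and (ii)--(iv) holding for the first return map to $\bigcup\{\varDelta_i\colon i\in I,\ i\ \text{not parabolic}\}$. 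Your proposal replaces this check with the claim that ``the running hypotheses of \cite{JT} hold here,'' which is the conclusion to be established rather than an argument, and the conditions settled by (A1) and by the first-return construction are structural facts about the parabolic branches, not consequences of the distortion bound $M_0$. Since the whole content of Proposition~\ref{sat-prop} is exactly this verification, you should carry it out explicitly: differentiate $\phi_i\circ f_i={\rm id}$ twice to get the identity above, and then confirm each condition of \cite[Proposition~5.2]{JT} one by one. This is a fixable incompleteness rather than a wrong approach, but as written the (B1) half of your proof does not yet stand on its own.
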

\begin{proof}
Let $f\colon\bigcup_{i\in I}\varDelta_i\to[0,1]$ be the Bernoulli map  associated with the IFS $\Phi$:
$\varDelta_i=\phi_i((0,1))$ and $f_i=f|_{\varDelta_i}$ for each $i\in I$.
We have $\phi_i\circ f_i(x)=x$ for all $x\in \varDelta_i$. Differentiating this equality twice
and rearranging the result gives 
\[\frac{f''_i(x)}{(f'_i(x))^2}=-\frac{\phi''_i(f_i(x))}{\phi'_i(f_i(x))}.\]
Since $i\in I$ and $x\in \varDelta_i$ are arbitrary, this equality and \eqref{dist-ineq} together imply that $f$ satisfies R\'enyi's condition \[\sup_{i\in I}\sup_{x\in\varDelta_i}\frac{|f''_i(x)|}{|f'_i(x)|^2}<\infty.\]
Condition \eqref{dist-ineq} also implies $D_1(\Phi)<\infty$. By Lemma~\ref{unif-decay}, $\Phi$ has uniform decay of fundamental intervals. It follows from \cite[Lemma~5.1]{JT} that $\Phi$ satisfies (B2).

Since $\Lambda(\Phi)\setminus\bigcap_{n=0}^\infty f^{-n}(\varDelta)$ is a countable set,
to verify (B1)
it suffices to check (i) (ii) (iii) (iv) in \cite[Proposition~5.2]{JT} for $f$. We have already shown that $f$ satisfies R\'enyi's condition. 
By (A1), 
(M3) in \cite{JT} holds and  
(i) in \cite[Proposition~5.2]{JT} is vacuous. 
 If we consider the first return map to the domain $\bigcup\{\varDelta_i\colon i\in I,\text{not parabolic}\}$, then all (ii) (iii) (iv) in \cite[Proposition~5.2]{JT} hold.\end{proof}

\subsubsection*{Example~1 (backward continued fraction)} Any irrational number $x$ in $(0,1)$ has a unique expansion of the form
\[
x=1-\confrac{1 }{a_{1}(x)} - \confrac{1 }{a_{2}(x)}  - \confrac{1 }{a_{3}(x)} -\cdots,\]
where $a_n(x)\in\mathbb N_{\geq2}$ for every $n\in\mathbb N$,
called {\it the backward continued fraction} \cite{IosKra02,R57}. 
The typical behavior of digits in this expansion in the sense of the Lebesgue measure on $(0,1)\setminus\mathbb Q$ is totally different from that of the regular continued fraction. 
For example, each integer $k\in\mathbb N$ typically
appears with positive definite asymptotic frequency $\frac{1}{\log2}\log\frac{(k+1)^2}{k(k+2)}$ in the latter, 
while in the former, any digit other than $2$ appears with 
asymptotic frequency zero.
For more details on typical behaviors of digits in the backward continued fraction, see \cite{A,AN,BS,IosKra02,T} for example.

The backward continued fraction is generated by the $2$-decaying parabolic IFS $\Phi=\{\phi_i\}_{i\in \mathbb N_{\geq2}}$ given by $\phi_i(x)=1-1/(x+i-1)$. 
The index $2$ is the only parabolic index: $\phi_2(0)=0$ and $\phi'_2(0)=1$. We have
$\Lambda(\Phi)=\{0\}\cup((0,1)\setminus\mathbb Q)$, and
 $x=\lim_{n\to\infty}\phi_{a_1(x)}\cdots\phi_{a_n(x)}(0)$ for all $x\in\Lambda(\Phi)$.
A direct calculation shows (A1). Condition (A2) follows from \eqref{dist-ineq} and the standard bounded distortion lemma near neutral fixed points as in the lemma below.
 Conditions (B1) (B2) follow from Proposition~\ref{sat-prop}.
\begin{lemma}[in the proof of {\cite[Lemma~5.3]{JT}}]
\label{scope'}
Let $f\colon[0,1)\to \mathbb R$ be a $C^{2}$ map satisfying
$f(0)=0$, $f'(0)=1$ and $f'(x)>1$ for all $x\in(0,1)$. There exists a constant $C>0$ such that
for every $n\in\mathbb N$ and all $x,y\in J_{n-1}$,
\[\log\frac{(f^n)'(x)}{(f^n)'(y)}\leq 
C|f^n(x)-f^n(y)|\sum_{i=0}^{n-1}\frac{|J_{i}|}{|J_0|},\]
where 
$q_0=1$, $f(q_{i+1})=q_{i}$ and $J_i=[q_{i+1},q_i)$ for $i=0,\ldots, n-1$.
\end{lemma}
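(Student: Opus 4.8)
The plan is to differentiate logarithmically. Writing $g=\log f'$, which is well defined and $C^1$ on $[0,1)$ since $f'\ge 1$, the chain rule gives
\[
\log\frac{(f^n)'(x)}{(f^n)'(y)}=\sum_{i=0}^{n-1}\bigl(g(f^i(x))-g(f^i(y))\bigr).
\]
Because $x,y\in J_{n-1}$ and $f(J_{j})=J_{j-1}$, the iterates satisfy $f^i(x),f^i(y)\in J_{n-1-i}$, and $f^n(x),f^n(y)\in J_{-1}:=f(J_0)$. Setting $d_i=|f^i(x)-f^i(y)|$ and applying the mean value theorem to each summand, with $\xi_i$ between $f^i(x)$ and $f^i(y)$, I bound each term by $|g'(\xi_i)|\,d_i$. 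The role of the $C^2$ hypothesis is that the nonlinearity $f''/(f')^2$ is bounded, say by $M$; this is Rényi's condition, which in the ambient application is exactly what \eqref{dist-ineq} supplies. Since $g'=f''/f'=f'\cdot f''/(f')^2$, I get $|g'(\xi_i)|\le M f'(\xi_i)$ and hence $\log\frac{(f^n)'(x)}{(f^n)'(y)}\le M\sum_{i=0}^{n-1}f'(\xi_i)\,d_i$.

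The heart of the matter is a single bounded-distortion estimate: the summability of the oscillations $C_2:=\sum_{j\ge 0}\operatorname{osc}_{J_j}g<\infty$, where $\operatorname{osc}_{J_j}g=\sup_{J_j}g-\inf_{J_j}g$. This would follow from $\operatorname{osc}_{J_j}g\le M\,\sup_{J_j}f'\cdot|J_j|$ together with the telescoping identity $\sum_{j\ge0}|J_j|=q_0=1$, the intervals $J_j$ shrinking to the neutral fixed point $0$, while near the expanding endpoint the derivative stays bounded so each $\sup_{J_j}f'$ is controlled. From $C_2<\infty$ I extract two consequences. First, comparing $\sup_{J_j}f'$ with the average $|J_{j-1}|/|J_j|=f'(\omega_j)$ yields $\sup_{J_j}f'\le e^{C_2}\,|J_{j-1}|/|J_j|$. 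Second, telescoping the ratios $d_{l+1}/d_l=f'(\zeta_l)$ against $|J_{n-2-l}|/|J_{n-1-l}|=f'(\omega_l)$ for $l=i,\dots,n-1$, each telescoped factor being an oscillation of $g$ on some $J_{n-1-l}$, shows that relative distances are comparable across scales:
\[
\frac{d_i}{|J_{n-1-i}|}\le e^{C_2}\,\frac{d_n}{|J_{-1}|}\qquad(0\le i\le n).
\]

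Combining these, since $\xi_i\in J_{n-1-i}$ I obtain $f'(\xi_i)\,d_i\le e^{2C_2}\,|J_{n-2-i}|\,d_n/|J_{-1}|$, whence
\[
\log\frac{(f^n)'(x)}{(f^n)'(y)}\le M e^{2C_2}\,\frac{d_n}{|J_{-1}|}\sum_{i=0}^{n-1}|J_{n-2-i}|
=M e^{2C_2}\,\frac{|f^n(x)-f^n(y)|}{|J_{-1}|}\sum_{j=-1}^{n-2}|J_j|.
\]
I would then reindex and use that $|J_0|,|J_{-1}|$ are fixed positive constants while $\sum_{j=0}^{n-1}|J_j|=1-q_n\ge|J_0|$, giving $|J_{-1}|^{-1}\sum_{j=-1}^{n-2}|J_j|\le\bigl(1+|J_0|/|J_{-1}|\bigr)\sum_{j=0}^{n-1}|J_j|/|J_0|$. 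This produces the asserted inequality with $C=M e^{2C_2}\bigl(1+|J_0|/|J_{-1}|\bigr)$.

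I expect the summability $C_2<\infty$ to be the only real obstacle: it is the genuine bounded-distortion input near the neutral fixed point, and it is precisely where the contraction of the $J_j$ toward $0$ (making $\sum_j|J_j|$ converge) must be balanced against the growth of $f'$. Everything else is bookkeeping with the mean value theorem and the telescoping $\sum_{j=0}^{n-1}|J_j|=1-q_n$.
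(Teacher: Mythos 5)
First, note that the paper itself contains no proof of Lemma~\ref{scope'}: it is quoted from the proof of \cite[Lemma~5.3]{JT}, so there is no in-paper argument to compare against and your proposal must stand on its own. Its skeleton is the standard one and is essentially sound: decompose $\log\frac{(f^n)'(x)}{(f^n)'(y)}$ into oscillations of $g=\log f'$ along the tower $J_{n-1}\mapsto J_{n-2}\mapsto\cdots\mapsto J_0$, compare the distances $d_i=|f^i(x)-f^i(y)|$ with the lengths $|J_{n-1-i}|$ by telescoping mean-value ratios, and take R\'enyi's condition $M=\sup|f''|/(f')^2<\infty$ as the true distortion input. You are also right that mere $C^2$ smoothness on the half-open interval $[0,1)$ is too weak and that in the ambient application R\'enyi is exactly what \eqref{dist-ineq} supplies.

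There is, however, a genuine gap: every quantity in your argument attached to the top interval $J_0=[q_1,1)$ or to its image $J_{-1}=f(J_0)$ is tacitly assumed finite, and this fails precisely in the situations the lemma is designed for. For the backward continued fraction (Example~1 of the paper) the relevant map is $f(y)=y/(1-y)$, with $f'(y)=(1-y)^{-2}$: then $\sup_{J_0}f'=\infty$, $\operatorname{osc}_{J_0}\log f'=\infty$, and $f(J_0)=[1,\infty)$, i.e.\ $|J_{-1}|=\infty$. Consequently your constant $C_2=\sum_{j\ge0}\operatorname{osc}_{J_j}g$ is infinite (your justification that near the expanding endpoint ``the derivative stays bounded'' is an extra hypothesis, not a consequence of the assumptions, and it is false here), the comparison $d_i/|J_{n-1-i}|\le e^{C_2}d_n/|J_{-1}|$ becomes vacuous, and your final sum $\sum_{j=-1}^{n-2}|J_j|$ contains an infinite term. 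The repair is localized but necessary: (i) set $C_2'=\sum_{j\ge1}\operatorname{osc}_{J_j}g$, which is finite because $J_j\subset[0,q_1]$ for $j\ge1$, $f''$ is continuous on the compact interval $[0,q_1]$ and $f'\ge1$, so $\operatorname{osc}_{J_j}g\le\sup_{[0,q_1]}|f''/f'|\,|J_j|$ and $\sum_{j\ge1}|J_j|=q_1$; (ii) telescope only down to the scale $J_0$, obtaining $d_i/|J_{n-1-i}|\le e^{C_2'}d_{n-1}/|J_0|\le e^{C_2'}d_n/|J_0|$ for $i\le n-2$ (the last step by expansion), and use $\sup_{J_j}f'\le e^{C_2'}|J_{j-1}|/|J_j|$ only for $j\ge1$; (iii) bound the single remaining term at scale $J_0$ not by the mean value theorem plus $\sup_{J_0}f'$, but by integrating R\'enyi: for $u,v\in J_0$, $|g(u)-g(v)|\le M|f(u)-f(v)|=Md_n$, since $|g'|\le Mf'$ on the interval between $u$ and $v$. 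With $|J_0|$ in place of $|J_{-1}|$ your computation then goes through and, using $\sum_{j=0}^{n-1}|J_j|/|J_0|\ge1$, yields the lemma with $C=M(e^{2C_2'}+1)$.
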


\subsubsection*{Example~2 (even-integer continued fraction)}
\if0
\textcolor{red}{Any irrational number $x$ in $(0,1)$ has a unique expansion of the form \[x=\confrac{1 }{2a_{1}(x)} + \confrac{\varepsilon_1 }{2a_{2}(x)} + \confrac{\varepsilon_2 }{2a_{3}(x)} +\cdots,\] where $a_n(x)\in\mathbb N$ and $\varepsilon_n\in\{1,-1\}$
for every $n\in\mathbb N$, called {\it the even-integer continued fraction} \cite{Sch82,Sch84}. 
In this expansion,
the typical behavior of digits in the sense of the Lebesgue measure on $(0,1)\setminus\mathbb Q$ is similar to that of the backward continued fraction.
This expansion is generated by the $2$-decaying parabolic IFS
$\Psi=\{\psi_i\}_{i\in\mathbb N}$ given by
$\psi_i(x)=1/(i-x)$ for $i$ even and $\psi_i(x)=1/(x+i+1)$ for $i$ odd.
The index $2$ is the only parabolic index: $\psi_2(1)=1$ and $\psi'_2(1)=1$. We have 
$\Lambda(\Psi)=\{1\}\cup((0,1)\setminus\mathbb Q)$,
and $x=\lim_{n\to\infty}\psi_{a_1(x)}\cdots\psi_{a_n(x)}(0)$ for all $x\in\Lambda(\Psi)$. 
A direct calculation shows (A1). By the same reasoning as for the backward continued fraction, one can check (A2).  Conditions (B1) (B2) follow from Proposition~\ref{sat-prop}.}
\fi

Any irrational number $x$ in $(0,1)$ has a unique continued fraction expansion of the form
\[
x=\confrac{1 }{b_1 } + \confrac{\varepsilon_1 }{b_{2} } + \confrac{\varepsilon_2 }{b_{3} } +\cdots,\]
where $b_n$ is a positive even integer and $\varepsilon_n\in\{1,-1\}$
for all $n\in\mathbb N$, called {\it the even-integer continued fraction} \cite{Sch82,Sch84}. 
This expansion is generated by
 the $2$-decaying parabolic IFS
$\Psi=\{\psi_i\}_{i\in\mathbb N}$ given by
$\psi_i(x)=1/(i-x)$ for $i$ even and $\psi_i(x)=1/(x+i+1)$ for $i$ odd.
The index $2$ is the only parabolic index: $\psi_2(1)=1$ and $\psi'_2(1)=1$. We have 
$\Lambda(\Psi)=\{1\}\cup((0,1)\setminus\mathbb Q)$,  and for all $x\in\Lambda(\Psi)$,
$x=\lim_{n\to\infty}\psi_{a_1(x)}\cdots\psi_{a_n(x)}(0)$ and
\[
x=\confrac{1 }{b_{1}(x)} + \confrac{\varepsilon_1(x) }{b_{2}(x)} + \confrac{\varepsilon_2(x) }{b_{3}(x)} +\cdots,\]
where
$(b_n(x),\varepsilon_n(x))=(a_n(x),-1)$ if $a_n(x)$ is even and  
$(b_n(x),\varepsilon_n(x))=(a_n(x)+1,1)$ if $a_n(x)$ is odd.
In this expansion,
the typical behavior of digits in the sense of the Lebesgue measure on $(0,1)\setminus\mathbb Q$ is similar to that of the backward continued fraction.
A direct calculation shows (A1). By the same reasoning as for the backward continued fraction, one can check (A2).  Conditions (B1) (B2) follow from Proposition~\ref{sat-prop}.

\subsubsection*{Example~3} From a given infinite parabolic IFS, one can define a new one by removing countably many non-parabolic indices.
For example, 
for any proper infinite subset $I$ of $\mathbb N_{\geq2}$ (resp. of $\mathbb N$) containing $2$, the IFS
$\{\phi_i\}_{i\in I}$ from Example~1 (resp. the IFS $\{\psi_i\}_{i\in I}$ from Example~2) is a $2$-decaying parabolic IFS. Conditions (B1) (B2) follow from Proposition~\ref{sat-prop}.
\subsection*{Acknowledgments}
I thank the referee for its careful reading of the manuscript and giving valuable comments and suggestions.
I thank Johannes Jaerisch and Yuto Nakajima for fruitful discussions. 
This research was partially supported by the JSPS KAKENHI 25K21999, Grant-in-Aid for Challenging Research (Exploratory). 

\end{document}